\def\E{\mathbb{E}}
\newtheorem{remark}{Remark}
\newcommand{\mE}{\mathbb{E}}
\title{Optimal Order Time Discretizations for Stochastic Semilinear Wave Equations with Multiplicative Noise}
\author{
	Xiaobing Feng\thanks{Department of Mathematics, The University of Tennessee, Knoxville, TN 37996, U.S.A. This author was partially supported by the NSF grant DMS-2309626. ({\tt xfeng@utk.edu}).}  
	\and
	Yukun Li\thanks{Department of Mathematics, University of Central Florida, Orlando, FL, 32816, U.S.A. This author was partially supported by the NSF grant DMS-2110728. ({\tt yukun.li@ucf.edu}).}
	\and
	Liet Vo\thanks{School of Mathematical and Statistical Sciences, The University of Texas Rio Grande Valley, Edinburg, TX 78541, U.S.A.  ({\tt liet.vo@utrgv.edu}).} 
}
\begin{document}
	\maketitle
	
	\begin{abstract}
		This paper is concerned with developing and analyzing two novel implicit temporal discretization methods for the stochastic semilinear wave equations with multiplicative noise.  The proposed methods are natural extensions of well-known time-discrete schemes for deterministic wave equations, hence, they are easy to implement. It is proved that both methods are energy-stable. Moreover, the first method is shown to converge with the linear order in the energy norm, while the second method converges with the $\mathcal{O}(\tau^{\frac32})$ order in the $L^2$-norm, which is optimal with respect to the time regularity of the solution to the underlying stochastic PDE. The convergence analyses of both methods, which are different and quite involved, require some novel numerical techniques to overcome difficulties caused by the nonlinear noise term and the interplay between nonlinear drift and diffusion.  Numerical experiments are provided to validate the sharpness of the theoretical error estimate results. 
\end{abstract}

\begin{keywords}
Stochastic wave equations, It\^o integral, variational solutions, H\"older continuity,  time discretization, Monte Carlo method, error estimates. 
\end{keywords}

\begin{AMS}
65N12, %Stability and convergence of numerical methods
65N15, %Error bounds
65N30 %Finite elements, Rayleigh-Ritz and Galerkin methods, finite methods
\end{AMS}

\section{Introduction}\label{sec-1}
In this paper, we construct and analyze two optimal order time-discrete methods for the following initial-boundary value problem for the stochastic semilinear wave equation: 
\begin{subequations}\label{eq20230423_1}
\begin{alignat}{2}
	du_t - \Delta u &=F\left(u\right)+\sigma\left(u\right) dW(t) &&\quad \text { in }(0, T) \times \mathcal{D}, \label{spde} \\ 
	u(0, \cdot) &=u_0(\cdot), \quad u_t(0, \cdot) =v_0(\cdot) &&\quad \text { in } \mathcal{D}, \\ 
	u(t, \cdot) &=0 &&\quad\text { on } \partial \mathcal{D}\times\in(0, T),
\end{alignat}
\end{subequations}
where $\mathcal{D} \subset \mathbb{R}^d\ (1 \leq d \leq 3)$ is a bounded Lipschitz domain, $\{W(t)\}_{t \geq 0}$ denotes an $n$-dimensional Wiener process defined on a filtered probability space $(\Omega, \mathcal{F}, \mathbb{F}, \mathbb{P})$ with $\mathbb{F}=\left\{\mathcal{F}_t\right\}_{0 \leq t \leq T}$, the initial data $u_0$ and $v_0$ are two given $\mathcal{F}_0$-measurable vector fields, $F(u)$ and $\sigma(u)$  two given functionals (see Section \ref{sec-2} for precise assumptions on $F$ and $\sigma$).  We note that 
equation \eqref{spde} is often rewritten in the following mixed form:
\begin{subequations} \label{sac_s_mixed}
\begin{align}
	du &=v dt, \\
	dv - \Delta u &=F\left(u\right)+\sigma\left(u\right) dW(t) , 
\end{align}
\end{subequations}
where the auxiliary variable $v:=u_t$ defines the velocity when $u$ represents the displacement.  We shall frequently use the above mixed form throughout this paper. 

The wave propagation problems have wide applications in various fields and have been intensively studied for decades.   There are two defining factors that determine wave propagation, one is the source that generates the wave and the other is the medium in which the wave propagates. The above scalar wave equation indicates that the wave source is acoustic (hence, the equation is also called a stochastic acoustic wave equation).  The inclusion of the noise term is a mathematical formalism of uncertainties in the medium and/or in the source. 
We refer the readers to \cite{adjerid2011discontinuous,baccouch2012local,baker1976error,chou2014optimal,chung2006optimal,chung2009optimal,dupont19732,falk1999explicit,grote2006discontinuous,grote2009optimal,monk2005discontinuous,riviere2003discontinuous,safjan1993high,sun2021,xing2013energy,zhong2011numerical} and the references therein for extensive studies of deterministic wave propagation problems and to  \cite{chow2015stochastic,chow2002stochastic,chow2006asymptotics,chow2009nonlinear,dalang2009stochastic,dalang1998stochastic,millet2000stochastic,millet1999stochastic} for mathematical analyses for the semilinear stochastic wave equation.

Because there are no closed-form solutions in general, numerical approximations of the above stochastic wave equation, which is the only means of solving the stochastic PDEs, have garnered a lot of attention in recent years and various numerical methods were proposed and analyzed.  The case of additive 
noise (i.e., $\sigma(u)$ is independent of $u$) was well addressed in  \cite{cohen2013trigonometric,cui2019strong,gubinelli2018renormalization,hausenblas2010weak,kovacs2010finite} and the case of multiplicative noise was intensively investigated in  \cite{Anton2016full,cohen2018numerical,cohen2015fully,feng2022higher,hong2021energy,li2022finite,quer2006space}. 
Among those results, we note that  the strong convergence of order $\mathcal{O}(\tau^{\frac12})$ 
($\tau>0$ stands for the time mesh size) in the $L^2$-norm was obtained in \cite{quer2006space,walsh2006numerical}  for time approximations of  the displacement $u$ in the case of functional-type multiplicative noise (i.e., $\sigma$ only depends on $u$) and in \cite{li2022finite} where polynomial drift $F$,  which is not globally Lipschitz continuous,  was considered.  %and the convergence is established on a probability one set.
Similar results were also obtained in \cite{feng2022higher} for more general drift and diffusion terms which may depend on both $u$ and $v$.  
The $\mathcal{O}(k)$ order time-discrete schemes were first obtained in \cite{cohen2013trigonometric} for the linear stochastic wave equation (i.e., $F(u)$ is linear) and were recently extended to the general case of nonlinear diffusion $\sigma$ and drift $F$  in  \cite{feng2022higher}. 

A natural question that was raised in  \cite{feng2022higher} asks what the optimal rate of convergence should be for temporal approximations of problem \eqref{eq20230423_1}. Here the optimality is with respect to the regularity of $u$ in time $t$.  Since $v=u_t$ is expected to be $\frac12$-H\"older continuous with respect to $t$, as a result, $u$ belongs to $H^{\frac32} (0,T; L^2(D))$ a.s. (see Section \ref{sec-2} for the detailed space definitions). Therefore, the optimal rate of convergence in the $L^2$-norm for temporal approximations is expected to be   
$\mathcal{O}(\tau^{\frac32})$.  One of the primary goals of  \cite{feng2022higher} was to construct 
such a time-discrete scheme and it was successfully achieved.  A special time-discrete scheme was developed in \cite{feng2022higher}   by using an operator splitting technique that splits the Laplacian into two parts and approximates each part at different time levels. It was proved that the proposed scheme converges 
in the energy norm with $\mathcal{O}(\tau^{\frac12})$ rate in the general case and in the $L^2$-norm with optimal  $\mathcal{O}(\tau^{\frac32})$ rate when the drift and diffusion only depend on $u$.   
In this paper, we continue addressing this interesting optimality question raised in \cite{feng2022higher}  and restated above. However, here our focus is on constructing simpler and more natural, especially optimal time-discrete schemes for problem \eqref{eq20230423_1}.

The primary goal of this paper is to develop and analyze two novel optimal order time-discrete schemes for problem \eqref{eq20230423_1}. Specifically, we first design a scheme using the stochastic Taylor's formula that is energy-stable and optimally convergent in the energy norm. This is done based on the mixed form \eqref{sac_s_mixed} of the stochastic wave equation.  Our second scheme also maintains the energy stability, in addition, it is proved to converge with the optimal rate $\mathcal{O}(\tau^{\frac32})$ in the $L^2$-norm. The idea for designing the second scheme is quite natural because it is constructed by using the structure of strong solutions of the stochastic wave equation. 

The remainder of this paper is organized as follows. In Section \ref{sec-2}, we introduce some notation, and assumptions on the drift and diffusion terms, and quote some known H\"{o}lder continuity results for the PDE solution. In Section \ref{sec-3}, we present two time-discrete schemes for problem  \eqref{eq20230423_1} and prove some energy stability results for both schemes.  Section \ref{sec-4} is devoted to convergence analysis. The highlights of the section are to establish the error estimates in both the energy norm and the $L^2$-norm. Finally, in Section \ref{sec-5}, we provide several numerical experiments to validate our theoretical error estimate results. %and gauge the performance of the proposed numerical methods. 

\section{Preliminaries}\label{sec-2}
\subsection{Notations}

Standard function and space notation are adopted in this paper. In particular, ${ H}^{k}(D)$, for $k \geq 1$, denote the standard Sobolev spaces, and  ${ H}^1_0(D)$ denotes the subspace of ${H}^1(D)$ consisting of those functions which have zero trace on $\partial D$. $(\cdot,\cdot):=(\cdot,\cdot)_D$ and $\Vert \cdot \Vert_{L^2}$ denote respectively the standard $L^2$-inner product and its  induced norm. 	$C$ will be used to denote a generic constant that is independent of the mesh parameters $h$ and $\tau$.  

Moreover, let $(\Omega,\mathcal{F}, \{\mathcal{F}_t\},\mathbb{P})$ be a filtered probability space with the probability measure $\mathbb{P}$, the 
$\sigma$-algebra $\mathcal{F}$ and the continuous filtration $\{\mathcal{F}_t\} \subset \mathcal{F}$. For a random variable $v$ 
defined on $(\Omega,\mathcal{F}, \{\mathcal{F}_t\},\mathbb{P})$,
${\mathbb E}[v]$ denotes the expected value of $v$. 
For a vector space $X$ with norm $\|\cdot\|_{X}$,  and $1 \leq p < \infty$, we define the Bochner space
$\bigl(L^p(\Omega;X); \|v\|_{L^p(\Omega;X)} \bigr)$, where
$\|v\|_{L^p(\Omega;X)}:=\bigl({\mathbb E} [ \Vert v \Vert_X^p]\bigr)^{\frac1p}$.
Let $\mathcal{K}, \mathcal{H}$ be two separable Hilbert spaces, we use 
% $\mathcal{L}(\mathcal{K}, \mathcal{H})$ to denote the space of all linear maps from  $\mathcal{K}$ to $\mathcal{H}$ and 
$\mathcal{L}_m(\mathcal{K}, \mathcal{H})$ to denote the space of all multi-linear maps from $\mathcal{K} \times \cdots \times \mathcal{K}$ ($m$-times) to $\mathcal{H}$ for $m \geq 1$.  For a mapping $\Psi: H_0^1 \times {H}_0^1 \rightarrow {L}^2$, we introduce the notation $D_u \Psi(u, v) \in \mathcal{L}\left({H}_0^1, {L}^2\right)$ for the Gateaux derivative with respect to  $u$, whose action is seen as
$$
\xi \mapsto D_u \Psi(u, v)(\xi) \qquad \text { for } \xi \in {H}_0^1(D) .
$$

We denote the second derivative with respect to $u$ by $D_u^2 \Psi(u, v) \in \mathcal{L}_2\left({H}_0^1, {L}^2\right)$, whose action can be seen as
$$
(\xi, \zeta) \mapsto D_u^2 \Psi(u, v)(\xi, \zeta):=\left(D_u^2 \Psi(u, v) \xi\right)(\zeta) \quad \text { for }(\xi, \zeta) \in\left[{H}_0^1(D)\right]^2 .
$$

%Next, let $\{t_n\}_{n=0}^{[T\tau^{-1}]}$ be a uniform partitition of  the inteval $[0,T]$ with mesh size $ \tau := t_{n+1} - t_n$.
Next, 	let $N>>1$ be a positive integer and $\tau:=T/N$. Let $\{t_n\}_{n=0}^N$ be a uniform partition of  the interval $[0,T]$ with mesh size $ \tau$. 	We define the useful notation $\overline{\Delta}W_{n}$ and $\widetilde{\Delta}W_n$ as follows:
\begin{align*}
\overline{\Delta}W_{n} &:= W(t_{n+1}) - W(t_n), \\ 
\widetilde{\Delta}W_n &:= \int_{t_n}^{t_{n+1}}[W(t_{n+1})-W(s)]ds\\
&= \tau W(t_{n+1}) - \int_{t_n}^{t_{n+1}} W(s)\, ds. %\qquad \tau := t_{n+1} - t_n.
\end{align*}
In addition, following  \cite{feng2022higher},  we approximate the last integral with higher accuracy as follows:
\begin{align}\label{approx_integral}
\int_{t_n}^{t_{n+1}} W(s)\, ds \approx \tau^3 \sum_{\ell=1}^{\tau^{-2}} W(t_{n,\ell}),
\end{align}
where $\{W({t_{n,\ell}})\}_{\ell=1}^{\tau^{-2}}$ is the piecewise affine approximation of $W$ on  $[t_n, t_{n+1}]$ over an equidistant mesh $\{t_{n,\ell}\}_{\ell=1}^{\tau^{-2}}$ with mesh size $\tau^3= t_{n,\ell + 1} - t_{n,\ell}$.

Thus, we obtain the following high order approximation of $\widetilde{\Delta}W_n$ (see Lemma \ref{increment}):
\begin{align}
\widehat{\Delta} W_n &:= \tau W(t_{n+1}) - \tau^3 \sum_{\ell = 1}^{\tau^{-2}} W(t_{n,\ell}).
\end{align}
We will frequently write $	\widetilde{\Delta} W_n \approx \widehat{\Delta} W_n$ in the subsequent sections. 

%Therefore, we can approximate $\widetilde{\Delta}W_n$ by using $\widehat{\Delta}W_n$ as below.
%	\begin{align}
%	\widetilde{\Delta} W_n \approx \widehat{\Delta} W_n.\label{eq20230702_2}
%	\end{align}

%	\begin{align*}
%		v(s)-v\left(t_n\right) & =\int_{t_n}^s \Delta u+f(u)d\lambda+\int_{t_n}^s g(u(\lambda)) d W_\lambda, \\
%		\int_{t_n}^{t_{n+1}} v(s)-v\left(t_n\right) d s & =\int_{t_n}^{t_{n+1}}\int_{t_n}^s\Delta u+f(u)d\lambda+\int_{t_n}^{t_{n+1}} \int_{t_n}^s g(u(\lambda)) d W_\lambda d s, \\
%		{\left[u\left(t_{n+1}\right)-u\left(t_n\right)\right]-\tau v\left(t_n\right) } & =\int_{t_n}^{t_{n+1}}\int_{t_n}^s\Delta u+f(u)d\lambda ds+\int_{t_n}^{t_{n+1}} \int_{t_n}^s g(u(\lambda)) d W_\lambda d s .
%	\end{align*}

\subsection{Useful facts} 
In this subsection, we collect a few useful facts which will be crucially used in the proofs of our convergence results in the subsequent sections.  The first one is the following refined quadrature rule \cite[Theorem 2]{Dragomir}.

\begin{lemma}
Let $\phi \in C^{1,\alpha} ([0, T]; \mathbb{R})$ for some $\alpha \in (0, 1]$. Then, there exists a constant $C_e >0$ such that
\begin{align*}
	\biggl|\frac{\phi(0)  + \phi(T)}{2} - \frac{1}{T} \int_0^T \phi(\xi)\, d\xi\biggr| \leq \frac{C_e}{(\alpha + 2)(\alpha + 3)} \, T^{1+\alpha},
\end{align*}
where the constant $C_e$ satisfies
\begin{align*}
	|\phi'(t) - \phi'(s)| \leq C_e |t-s|^{\alpha}.
\end{align*}
\end{lemma}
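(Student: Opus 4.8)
The plan is to read the claimed estimate as a trapezoidal quadrature error bound under a Hölder condition on $\phi'$, and to reduce it to a single elementary power integral after a symmetrization step. Write
\begin{align*}
E := \frac{T}{2}\bigl(\phi(0)+\phi(T)\bigr) - \int_0^T \phi(\xi)\,d\xi,
\end{align*}
so that the left-hand side of the lemma is exactly $|E|/T$. First I would perform one integration by parts using the kernel $\xi-\tfrac{T}{2}$ (an antiderivative of $1$, shifted so that the boundary terms reproduce the trapezoidal weights $\tfrac12\phi(0)$ and $\tfrac12\phi(T)$), obtaining the identity
\begin{align*}
E = \int_0^T \Bigl(\xi - \tfrac{T}{2}\Bigr)\,\phi'(\xi)\,d\xi .
\end{align*}
This expresses the error purely through $\phi'$, which is the only derivative the hypothesis $\phi\in C^{1,\alpha}$ lets us use.

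The second step is the decisive one: rather than estimating $E$ directly, I would symmetrize. Substituting $\xi\mapsto T-\xi$ and using that the kernel is odd about $T/2$ gives $E = -\int_0^T (\xi-\tfrac T2)\,\phi'(T-\xi)\,d\xi$; averaging the two representations yields
\begin{align*}
2E = \int_0^T \Bigl(\xi - \tfrac{T}{2}\Bigr)\bigl[\phi'(\xi) - \phi'(T-\xi)\bigr]\,d\xi .
\end{align*}
Now the integrand carries the increment $\phi'(\xi)-\phi'(T-\xi)$, to which the Hölder hypothesis applies with $|\xi-(T-\xi)|=2|\xi-\tfrac T2|$, so $|\phi'(\xi)-\phi'(T-\xi)|\le C_e\,2^{\alpha}|\xi-\tfrac T2|^{\alpha}$. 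Hence
\begin{align*}
2|E| \le C_e\,2^{\alpha}\int_0^T \Bigl|\xi - \tfrac{T}{2}\Bigr|^{1+\alpha}\,d\xi = C_e\,2^{\alpha}\,\frac{T^{2+\alpha}}{(2+\alpha)\,2^{1+\alpha}},
\end{align*}
and therefore $|E|/T \le \dfrac{C_e\,T^{1+\alpha}}{4(\alpha+2)}$. I would then close the estimate by comparing constants: since $\alpha\in(0,1]$ we have $\alpha+3\le 4$, so $\tfrac{1}{4(\alpha+2)}\le\tfrac{1}{(\alpha+2)(\alpha+3)}$, and the stated bound follows (the two constants coinciding exactly at $\alpha=1$).

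The main obstacle to anticipate is that the symmetrization is genuinely needed rather than cosmetic. The naive estimate that subtracts the midpoint value $\phi'(T/2)$ in the first identity only yields the constant $\tfrac{1}{(\alpha+2)2^{1+\alpha}}$, which \emph{exceeds} $\tfrac{1}{(\alpha+2)(\alpha+3)}$ for small $\alpha$ (there $2^{1+\alpha}<3+\alpha$) and so fails to establish the claim; pairing $\xi$ with $T-\xi$ before invoking Hölder continuity is precisely what produces the extra factor that makes the constant admissible across the whole range $\alpha\in(0,1]$. The only other point requiring care is the bookkeeping of the boundary terms in the integration by parts, which must be arranged so that the kernel $\xi-\tfrac T2$ reproduces the trapezoidal weights exactly; everything after that is routine evaluation of $\int_0^T|\xi-\tfrac T2|^{1+\alpha}\,d\xi$.
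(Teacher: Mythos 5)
Your proof is correct, and it is worth noting that the paper itself contains no proof of this lemma: it is quoted verbatim as a known result, citing Theorem 2 of Dragomir--Mabizela, so your argument serves as a self-contained elementary substitute for that citation rather than an alternative to an in-paper proof. The three steps all check out. Integration by parts against the kernel $\xi - \tfrac{T}{2}$ indeed gives $E = \int_0^T (\xi - \tfrac{T}{2})\,\phi'(\xi)\,d\xi$ for $E := \tfrac{T}{2}(\phi(0)+\phi(T)) - \int_0^T \phi(\xi)\,d\xi$, since the boundary terms are $\tfrac{T}{2}\phi(T) + \tfrac{T}{2}\phi(0)$. The reflection $\xi \mapsto T-\xi$ flips the sign of the odd kernel, so averaging gives $2E = \int_0^T (\xi - \tfrac{T}{2})\bigl[\phi'(\xi) - \phi'(T-\xi)\bigr]\,d\xi$, and the H\"older hypothesis applies with gap $|2\xi - T| = 2|\xi - \tfrac{T}{2}|$. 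With $\int_0^T |\xi - \tfrac{T}{2}|^{1+\alpha}\,d\xi = T^{2+\alpha}/\bigl((\alpha+2)\,2^{1+\alpha}\bigr)$ this yields $|E|/T \le C_e\,T^{1+\alpha}/\bigl(4(\alpha+2)\bigr)$, and your final comparison is right: $\alpha + 3 \le 4$ on $(0,1]$ gives $\tfrac{1}{4(\alpha+2)} \le \tfrac{1}{(\alpha+2)(\alpha+3)}$, with equality exactly at $\alpha = 1$, so you in fact prove a slightly sharper constant than the lemma states for $\alpha < 1$. Your side remark is also accurate: the unsymmetrized bound obtained by subtracting $\phi'(\tfrac{T}{2})$ (legitimate because $\int_0^T(\xi-\tfrac{T}{2})\,d\xi = 0$) gives only $\tfrac{1}{(\alpha+2)2^{1+\alpha}}$, which exceeds $\tfrac{1}{(\alpha+2)(\alpha+3)}$ whenever $2^{1+\alpha} < \alpha+3$, i.e.\ for all $\alpha < 1$, so the symmetrization is genuinely needed to reach the quoted constant. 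This comfortably covers the paper's actual use of the lemma, which invokes it with $\alpha = \tfrac12$ in the estimates of the terms ${\tt II_1}$ and ${\tt III_1^1}$ in the proof of Theorem \ref{theorem32}.
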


Next, we state a few properties of $\overline{\Delta}W_{n}$, $\widetilde{\Delta}W_n$, and $ \widehat{\Delta} W_n$ which are defined in the previous subsection.  Their proofs can be found in \cite[Remark 1 and 2]{feng2022higher}.

%	\begin{remark}
\begin{lemma}\label{increment} 
	There hold the following estimates for the increments $\overline{\Delta}W_{n}$, $\widetilde{\Delta}W_n$, and $ \widehat{\Delta} W_n$:
	\begin{enumerate}
		\item[\rm (i)] $\mE\bigl[|\widetilde{\Delta}W_n - \widehat{\Delta}W_n|^2\bigr] \leq C\tau^5$.
		\item[\rm (ii)] $\mE\bigl[|\widetilde{\Delta}W_n|^2\bigr] + \mE\bigl[|\widehat{\Delta}W_n|^2\bigr] \leq C\tau^3$.
	\end{enumerate}
\end{lemma}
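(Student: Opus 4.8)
Both estimates reduce to elementary mean-square computations for the Wiener process, and the plan is to establish (i) first and then bootstrap (ii) from it. For part (i), the first step is to observe that the quantity in question is a pure quadrature error: subtracting the two definitions cancels the common term $\tau W(t_{n+1})$ and leaves
\begin{align*}
\widetilde{\Delta}W_n - \widehat{\Delta}W_n = \tau^3 \sum_{\ell=1}^{\tau^{-2}} W(t_{n,\ell}) - \int_{t_n}^{t_{n+1}} W(s)\, ds.
\end{align*}
I would then decompose the integral over the subintervals $[t_{n,\ell}, t_{n,\ell+1}]$ of length $\tau^3$ and, using $\tau^3 W(t_{n,\ell}) = \int_{t_{n,\ell}}^{t_{n,\ell+1}} W(t_{n,\ell})\, ds$, write the error as $\sum_{\ell} g_\ell$ with $g_\ell := \int_{t_{n,\ell}}^{t_{n,\ell+1}} [W(t_{n,\ell}) - W(s)]\, ds$. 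The crucial structural facts are that each $g_\ell$ is built only from the increments of $W$ over the block $[t_{n,\ell}, t_{n,\ell+1}]$, that $\mE[g_\ell]=0$, and that the $g_\ell$ are mutually independent because the underlying increments live on disjoint intervals. Hence $\mE[|\sum_\ell g_\ell|^2] = \sum_\ell \mE[|g_\ell|^2]$, with no surviving cross terms.

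The remaining computation for (i) is to bound a single $\mE[|g_\ell|^2]$. Applying Fubini together with the covariance identity $\mE[(W(s)-W(t_{n,\ell}))\cdot(W(r)-W(t_{n,\ell}))] = C_n \min(s - t_{n,\ell}, r - t_{n,\ell})$ for the $n$-dimensional Wiener process (the constant $C_n$ coming from summing over the independent scalar components) gives $\mE[|g_\ell|^2] = C_n \int_{t_{n,\ell}}^{t_{n,\ell+1}}\int_{t_{n,\ell}}^{t_{n,\ell+1}} \min(\cdot,\cdot)\, ds\, dr = \mathcal{O}(\tau^9)$ per subinterval. Summing over the $\tau^{-2}$ subintervals yields $\mE[|\widetilde{\Delta}W_n - \widehat{\Delta}W_n|^2] = \mathcal{O}(\tau^7)$, which is in particular $\leq C\tau^5$, proving (i).

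For part (ii), I would estimate $\widetilde{\Delta}W_n$ directly and then obtain $\widehat{\Delta}W_n$ by comparison. Writing $\widetilde{\Delta}W_n = \int_{t_n}^{t_{n+1}} [W(t_{n+1}) - W(s)]\, ds$ and applying Fubini with the covariance identity $\mE[(W(t_{n+1})-W(s))\cdot(W(t_{n+1})-W(r))] = C_n (t_{n+1} - \max(s,r))$ gives, after the change of variables $s \mapsto t_{n+1}-s$,
\begin{align*}
\mE[|\widetilde{\Delta}W_n|^2] = C_n \int_{t_n}^{t_{n+1}}\int_{t_n}^{t_{n+1}} (t_{n+1}-\max(s,r))\, ds\, dr = C_n \int_0^{\tau}\int_0^{\tau} \min(a,b)\, da\, db = \mathcal{O}(\tau^3).
\end{align*}
The bound on $\widehat{\Delta}W_n$ then follows for free from the triangle inequality in $L^2(\Omega)$ together with part (i), since
\begin{align*}
\bigl(\mE[|\widehat{\Delta}W_n|^2]\bigr)^{1/2} \leq \bigl(\mE[|\widetilde{\Delta}W_n|^2]\bigr)^{1/2} + \bigl(\mE[|\widetilde{\Delta}W_n - \widehat{\Delta}W_n|^2]\bigr)^{1/2} \leq C\tau^{3/2} + C\tau^{5/2},
\end{align*}
so that $\mE[|\widehat{\Delta}W_n|^2] \leq C\tau^3$, completing (ii).

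There is no serious analytical obstacle here; the only thing that really has to be handled with care is the independence bookkeeping in part (i). If one instead bounds the integrand pathwise and sums absolute values, the cross terms survive and the cancellation is lost, degrading the rate. The clean route is to keep the second moment of the sum and exploit that the $g_\ell$ are independent with mean zero, so that only the diagonal terms contribute; the vector-valued (i.e. $n$-dimensional) nature of $W$ enters only through the harmless multiplicative constant $C_n$ arising from the independent scalar components.
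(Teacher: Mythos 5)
Your proof is correct and follows essentially the same route as the paper's own source for this lemma (the paper defers the proof to Remarks 1 and 2 of \cite{feng2022higher}): decompose into mean-zero quadrature errors on the sub-blocks, exploit independence of Brownian increments over disjoint intervals so only diagonal terms survive, and evaluate the resulting Gaussian covariance integrals, which is equivalent to the stochastic-Fubini/It\^o-isometry computation used there. As a minor remark, with the sub-mesh of size $\tau^3$ used in this paper your computation in (i) actually yields the stronger bound $\mathcal{O}(\tau^7)$ --- the stated $C\tau^5$ was tight in \cite{feng2022higher}, where the sub-mesh was coarser --- so the lemma follows a fortiori.
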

%	\end{remark}

\subsection{Structure assumptions} 
Below we state the structure assumptions on nonlinear functions $F$ and $\sigma$ that were alluded in Section \ref{sec-1} and will be used in our convergence analysis. We note that no attempts are made to make these conditions sharp although we believe that they can be weakened but leave the task to the interested reader to exploit. 	

\medskip
\begin{enumerate}
\item[{\bf(A1)}]  Let $F: {H}_0^1(D) \rightarrow {L}^2(D)$ and $\sigma: {H}_0^1(D) \rightarrow {H}_0^1(D)$. Assume that there exists a constant $C>0$ such that
$$ \|F(u)\|_{\mathcal{K}} + \|\sigma(u)\|_{\mathcal{K}} \leq C(1 + \|u\|_{\mathcal{K}}),
$$
where $\mathcal{K} = L^2(D)$ or $H^1(D)$.
\item[{\bf(A2)}] Assume there is a constant $C>0$ such that 
$$
\left\|F(u)-F(\tilde{u})\right\|_{\mathcal{K}}+\left\|\sigma(u)-\sigma(\tilde{u})\right\|_{\mathcal{K}} \leq C\|u-\tilde{u}\|_{\mathcal{K}},
$$
where $\mathcal{K} = L^2(D)$ or $H^1(D)$.
\item[{\bf (A3)}] Assume there exists a constant $C>0$ such that 
$$
\left\|D_u^m F(\cdot)\right\|_{L^{\infty}\left({H}_0^1 ; \mathcal{L}_m\left({H}_0^1, {L}^2\right)\right)}+\left\|D_u^m \sigma(\cdot)\right\|_{L^{\infty}\left({H}_0^1 ; \mathcal{L}_m\left({H}_0^1, {H}_0^1\right)\right)} \leq C, \quad m=1,2,3 .
$$			
\end{enumerate}
It is easy to see that {\bf(A1)} essentially assumes that $F$ and $\sigma$ have linear growth for large input, {\bf(A2)} assumes they are Lipschitz continuous, and {\bf(A3)} imposes a boundedness condition on the derivatives of the two functions. 

\subsection{Variational solutions and their properties} %H\"older continuity estimates}
%
%Denote $v = u_t$, the SPDE \eqref{eq20230423_1} can be written in the mixed form
%\begin{equation} \label{sac_s_mixed}
%	\left\{
%	\begin{aligned}
%	du &= v dt, \\
%			dv &= \Delta u + F\left(u\right)+\sigma\left(u\right) dW_t.
%	\end{aligned}
%	\right.
%	\end{equation}

We define variational solutions for problem \eqref{eq20230423_1} based on its mixed form \eqref{sac_s_mixed}.

\begin{definition}
$(u, v)$ is called a variational solution to problem  \eqref{sac_s_mixed} if
\begin{enumerate}
\item[\rm (i)]  $(u, v)$ is an ${H}_0^1(D) \times {L}^2(D)$-valued $\left\{\mathcal{F}_t\right\}$-adapted process;
\item[\rm (ii)] $(u, v) \in L^2\left(\Omega ; C\left([0, T] ; {H}_0^1(D)\right)\right) \times L^2\left(\Omega ; C\left([0, T] ; {L}^2(D)\right)\right)$ such that  there hold $\mathbb{P}$-a.s for each $t \in[0, T]$  
\begin{subequations}\label{sac_mixed_weakform}
\begin{align}
	(u(t), \phi) & =\int_0^t(v, \phi) \mathrm{d} s+\left(u_0, \phi\right) \quad \forall \phi \in {L}^2(D), \\
	(v(t), \psi) & =-\int_0^t[(\nabla u, \nabla \psi)+(F(u), \psi)] \mathrm{d} s\\\nonumber
	&\qquad+\int_0^t(\psi, \sigma(u) \mathrm{d} W(s))+\left(v_0, \psi\right) \quad \forall \psi \in {H}_0^1(D);
\end{align}
\end{subequations}

\item[\rm (iii)] there exists a constant $C=C(T, D, u_0)>0$ such that there holds $\mathbb{P}$-a.s.
$$
\mathbb{E}\left[\sup _{0 \leq t \leq T} \|v(t)\|^2_{L^2} + \|\nabla u(t)\|^2_{L^2}\right] \leq C .
$$
\end{enumerate}
\end{definition} 

For the existence and uniqueness of such a variational solution, we refer the reader to \cite[Section 6.8, Theorem 8.4]{chow2015stochastic}.

Next, we cite the following H\"older continuity estimates from \cite[Lemma 3.2]{feng2022higher}.
\begin{lemma}\label{lemma2.3}
Suppose $(\mathbf{A 1})$, ($\mathbf{A 2}$), and ($\mathbf{A 3}$) hold. Let $(u, v)$ be the variational solution of \eqref{sac_s_mixed}.   Then for any $s, t \in[0, T]$, $p \geq 1$, there hold  the  following statements:
\begin{enumerate}
\item[\rm (i)] 	if $(u_0,v_0) \in L^2(\Omega; H^1_0(D)\times L^2(D))$, then
$$
\left(\mathbb{E}\left[\sup _{s \leq r \leq t}\|u(r)-u(s)\|_{{L}^2}^{2 p}\right]\right)^{\frac{1}{2p}} \leq C|t-s|;
$$
\item[\rm (ii)]  if $(u_0,v_0) \in L^2(\Omega; (H^1_0(D)\cap H^2(D))\times H^1_0(D))$, then
$$
\left(\mathbb{E}\left[\sup _{s \leq r \leq t}\|u(r)-u(s)\|_{{H}^1}^{2 p}\right]\right)^{\frac{1}{2p}}+\mathbb{E}\left[\sup _{s \leq r \leq t}\|v(r)-v(s)\|_{{L}^2}^2\right] \leq C|t-s|;
$$
\item[\rm (iii)] if $(u_0,v_0) \in L^2(\Omega; (H^1_0(D)\cap H^3(D))\times (H^1_0(D))\cap H^2(D))$, then
$$
\left(\mathbb{E}\left[\sup _{s \leq r \leq t}\|u(r)-u(s)\|_{{H}^2}^{2 p}\right]\right)^{\frac{1}{2p}}+\mathbb{E}\left[\sup _{s \leq r \leq t}\|v(r)-v(s)\|_{{H}^1}^2\right] \leq C|t-s|;
$$
\item[\rm (iv)]  if $(u_0,v_0) \in L^2(\Omega; (H^1_0(D)\cap H^4(D))\times (H^1_0(D))\cap H^3(D))$, then
$$
\left(\mathbb{E}\left[\sup _{s \leq r \leq t}\|u(r)-u(s)\|_{{H}^3}^{2 p}\right]\right)^{\frac{1}{2p}}+\mathbb{E}\left[\sup _{s \leq r \leq t}\|v(r)-v(s)\|_{{H}^2}^2\right] \leq C|t-s|.
$$
\end{enumerate}

\end{lemma}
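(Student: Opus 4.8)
The plan is to prove all four statements by a common three-step scheme, treating the index $k=0,1,2,3$ (corresponding to parts (i)--(iv)) as a bootstrap in the spatial regularity. The two building blocks are the integral identity $u(t)-u(s)=\int_s^t v(r)\,dr$, which follows from the first equation of the weak form \eqref{sac_mixed_weakform}, and the representation
\[
v(t)-v(s)=\int_s^t\bigl(\Delta u(r)+F(u(r))\bigr)\,dr+\int_s^t\sigma(u(r))\,dW(r),
\]
which follows from the second. The reason the $u$-increments are Lipschitz in $t$ (order $|t-s|$ in the $2p$-th moment) while the $v$-increments are only $\tfrac12$-H\"older (order $|t-s|$ in the \emph{second} moment) is precisely that $u$ is recovered from $v$ by a deterministic time integral, whereas $v$ inherits the roughness of the It\^o integral.

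Granting suitable a priori bounds (Step~1 below), the increment estimates are short. For the $u$-increments I would apply Minkowski's integral inequality to $u(t)-u(s)=\int_s^t v(r)\,dr$ in the $H^k$-norm, obtaining
\[
\Bigl(\mathbb{E}\bigl[\sup_{s\le r\le t}\|u(r)-u(s)\|_{H^k}^{2p}\bigr]\Bigr)^{\frac{1}{2p}}\le\int_s^t\Bigl(\mathbb{E}\bigl[\|v(r)\|_{H^k}^{2p}\bigr]\Bigr)^{\frac{1}{2p}}\,dr\le C\,|t-s|,
\]
using the uniform bound $\sup_r\mathbb{E}[\|v(r)\|_{H^k}^{2p}]\le C$. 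For the $v$-increments I would split the representation above into its drift and diffusion parts. The drift is estimated pathwise by $C|t-s|$ via Cauchy--Schwarz together with the a priori bound on $\|\Delta u\|_{H^{k-1}}$ and the linear growth assumption {\bf(A1)} on $F$; the diffusion is handled by the Burkholder--Davis--Gundy inequality,
\[
\mathbb{E}\Bigl[\sup_{s\le r\le t}\Bigl\|\int_s^r\sigma(u)\,dW\Bigr\|_{H^{k-1}}^2\Bigr]\le C\,\mathbb{E}\int_s^t\|\sigma(u(r))\|_{H^{k-1}}^2\,dr\le C\,|t-s|,
\]
again by {\bf(A1)}. Adding the two contributions yields $\mathbb{E}[\sup_{s\le r\le t}\|v(r)-v(s)\|_{H^{k-1}}^2]\le C|t-s|$.

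The crux, and the main obstacle, is Step~1: establishing the a priori moment bounds
\[
\mathbb{E}\Bigl[\sup_{0\le t\le T}\bigl(\|v(t)\|_{H^k}^{2p}+\|u(t)\|_{H^{k+1}}^{2p}\bigr)\Bigr]\le C,\qquad k=0,1,2,3,
\]
under the regularity of $(u_0,v_0)$ assumed in the respective part (note that statement (iii) of the definition only gives the case $k=0$, $p=1$). I would obtain these by applying It\^o's formula to the higher-order energy $\mathcal{E}_k(t):=\tfrac12\|v(t)\|_{H^k}^2+\tfrac12\|u(t)\|_{H^{k+1}}^2$ raised to the power $p$, testing the $v$-equation with the appropriate power of $-\Delta$ applied to $v$ so that the principal (wave) part stays conservative: the $\Delta u$ contribution recombines, via $u_t=v$, into the time derivative of $\|u\|_{H^{k+1}}^2$, leaving only the nonlinear and noise terms. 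The delicate points are (a) the It\^o correction (the quadratic variation of $\sigma(u)$) at the $H^k$ level forces control of $\|\sigma(u)\|_{H^k}$ in terms of $\|u\|_{H^k}$, which is exactly where the boundedness of $D_u^m\sigma$ up to $m=3$ in {\bf(A3)} enters (for instance $\Delta\sigma(u)=D_u\sigma(u)\Delta u+D_u^2\sigma(u)(\nabla u,\nabla u)$ must be bounded in $L^2$); (b) controlling the martingale term after raising to the $p$-th power, which requires the Burkholder--Davis--Gundy inequality followed by Young's inequality to absorb the resulting supremum back into the left-hand side; and (c) closing the estimate with Gronwall's inequality. The assumption in {\bf(A1)}--{\bf(A3)} that $\sigma$ maps $H_0^1$ into $H_0^1$ (rather than merely into $L^2$) is precisely what makes the $H^k$-level noise estimates, and hence the entire bootstrap, possible.
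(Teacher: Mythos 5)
The first thing to note is that the paper contains no proof of Lemma \ref{lemma2.3} to compare yours against: the result is quoted verbatim from \cite[Lemma 3.2]{feng2022higher}, and the paper's only ``proof'' is that citation. Judged on its own, your outline is the canonical route, and in spirit it is the route of the cited source: establish (higher-order) energy a priori bounds first, then obtain the $u$-increments from $u(t)-u(s)=\int_s^t v(r)\,dr$ via Minkowski's integral inequality, and the $v$-increments by splitting the second equation of \eqref{sac_mixed_weakform} into a drift part (pathwise $O(|t-s|)$, using the bound on $\|u\|_{H^{k+1}}$ and \textbf{(A1)}) and an It\^o part (Burkholder--Davis--Gundy, which is what caps $v$ at $\tfrac12$-H\"older in the mean-square sense). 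Your indexing $k\mapsto (H^{k+1},H^k)$ regularity of $(u,v)$ matches the graded hypotheses of parts (i)--(iv), and you correctly identify why $u$ gains a full power of $|t-s|$ while $v$ does not.

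As a standalone proof, however, the proposal is incomplete exactly where you concede it is: Step 1 carries essentially all the weight and is only sketched. Three concrete points. First, the moment bounds $\mathbb{E}\bigl[\sup_{0\le t\le T}\bigl(\|v(t)\|_{H^k}^{2p}+\|u(t)\|_{H^{k+1}}^{2p}\bigr)\bigr]\le C$ cannot follow from the lemma's stated hypothesis $(u_0,v_0)\in L^2(\Omega;\cdot)$ alone: already the initial energy $\mathcal{E}_k(0)^p$ need not be integrable for $p>1$, so your Gronwall argument requires $L^{2p}(\Omega)$-integrable (or deterministic) initial data; this integrability mismatch is inherited from the statement as quoted, but your proof should flag that it is using more than the stated hypothesis. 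Second, at levels $k\ge 2$ testing the $v$-equation with powers of $-\Delta$ applied to $v$ generates boundary terms unless compatibility conditions hold (e.g.\ $\Delta u=0$ on $\partial D$, and $\sigma(u)$, $\Delta\sigma(u)$ respecting the Dirichlet condition); \textbf{(A1)}--\textbf{(A3)} only give $\sigma:H^1_0\to H^1_0$ and boundedness of $D_u^m\sigma$ as maps into $H^1_0$, which covers $k\le 1$ but not the $H^2$/$H^3$-level integrations by parts, so this needs either extra structural assumptions or a commutator/difference-quotient argument. Third, your chain-rule identity $\Delta\sigma(u)=D_u\sigma(u)\Delta u+D_u^2\sigma(u)(\nabla u,\nabla u)$ presumes $\sigma$ is of Nemytskii type; for the abstract $\sigma$ of \textbf{(A3)} it must be replaced by estimates on the Gateaux derivatives combined with Sobolev embeddings (workable for $d\le 3$, e.g.\ $H^1\hookrightarrow L^6$ to handle the quadratic gradient term, but it has to be written). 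None of these is a wrong turn --- they are precisely the technical content delegated to \cite[Lemma 3.2]{feng2022higher} --- but as it stands your argument proves the increment estimates only conditionally on the unproven a priori bounds.
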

Since the variational solution has no (weak) derivatives in time, the above H\"older continuities play the crucial role of substituting time derivatives in the error analysis to be given in the subsequent sections.

%%%%%%%
%\section{Semi-discretization in time}\label{sec-3}  
\section{Time-discrete approximation schemes}\label{sec-3} 

\subsection{Formulation of schemes}
In this section, we propose two time-discrete schemes for problem  \eqref{eq20230423_1} which can be compactly written as the following $\theta$-scheme. 
\smallskip

{\bf $\theta$-scheme:}\, Let $N>>1$ be a positive integer and $\tau:=T/N$. Let $\{t_n\}_{n=0}^N$ be a uniform partition of  the interval $[0,T]$ with mesh size $ \tau$. For $\theta=0$ and $\theta = \frac12$, find $\mathcal{F}_{t_n}$ adapted process $\{(u^n,v^n)\}_{n=0}^N$ such that there hold $\mathbb{P}$-almost surely
\begin{subequations} \label{theta_scheme}
\begin{align}
&(u^{n+1}-u^n,\varphi) = \tau (v^{n+1},\varphi)-(\sigma(u^n)\widehat{\Delta}W_n,\varphi) \quad \forall \varphi \in H_0^1(D),\label{eq20230703_30}\\
&(v^{n+1} - v^n, \phi) +  \tau(\nabla u^{n,\theta}, \nabla \phi)  = \tau(F^{n,\theta}, \phi)\label{eq20230703_31}
+ (\sigma(u^n)\overline{\Delta} W_n, \phi)  \\\nonumber
&\hskip 2.0in + 2\theta (D_u\sigma(u^n)v^n\widehat{\Delta}W_n, \phi) \quad \forall \, \phi \in H^1_0(D),
\end{align}
\end{subequations}
where 
\begin{align*}
u^{n,\theta} := (1 - \theta) u^{n+1} + \theta u^{n-1},\qquad  F^{n,\theta} := (1 - \theta) F(u^{n+1}) + \theta  F(u^{n-1}).
\end{align*}

\begin{remark}\label{remark1} \
\begin{itemize}
\item[(a)] Equation \eqref{eq20230703_31} is essentially adapted from the time-discrete schemes of \cite{dupont19732} for the deterministic wave equation and a high order approximation of It\^o's stochastic integral (also see \cite{feng2022higher}).  On the other hand, at a glance, \eqref{eq20230703_30} may look unusual because it intends to approximate equation $u_t=v$ which does not directly involve any stochastic derivative. However, a closer look shows that it is quite natural in order to design an optimal order scheme. First, if one approximates $u_t=v$ by $u^{n+1}-u^n =\tau v^n$ as done in \cite{feng2022higher}, it is easy to see that such an approximation is only of first order in $\tau$. Second, to obtain a higher order approximation formula, we integrate both equations in \eqref{sac_s_mixed}  to get 
\begin{align*}
u(t_{n+1})-u(t_n) &= \int_{t_n}^{t_{n+1}} v(s) \, ds,   \\
v(s) - v(t_n) &=  \int_{t_n}^s \bigl[  \Delta u(\lambda) + F(u(\lambda)) \bigr]\, d\lambda   +\int_{t_n}^s \sigma(u(\lambda)) dW(\lambda)
\end{align*}
for $s\in [t_n, t_{n+1}]$.  Substituting the second quation into the first one we get 
\begin{align}\label{enhanced_holder}
u(t_{n+1})-u(t_n) = \tau v(t_n) &+ \int_{t_n}^{t_{n+1}} \int_{t_n}^s \bigl[  \Delta u(\lambda) + F(u(\lambda))   \bigr]\, d\lambda ds \\
& +\int_{t_n}^{t_{n+1}}  \int_{t_n}^s     \sigma(u(\lambda)) dW(\lambda) ds. \nonumber
\end{align}
Now, since 
\[
\int_{t_n}^{t_{n+1}}   \int_{t_n}^s \bigl[  \Delta u(\lambda) + F(u(\lambda))     \bigr]d\lambda ds =\mathcal{O}(\tau^2), 
\]
and 
\[
\int_{t_n}^{t_{n+1}}   \int_{t_n}^s     \sigma(u(\lambda)) dW(\lambda) ds = \sigma(u(t_n)) \int_{t_n}^{t_{n+1}}     \bigr[W(s)  -W(t_n) \bigr] \, ds
+ \mathcal{O}(\tau^2), 
\]
then we get the following truncation error equation:
\begin{align*} 
u(t_{n+1})-u(t_n) =  \tau v(t_n)  + \sigma(u(t_n)) \widehat{\Delta}W_n + \mathcal{O}(\tau^{\frac32}),
\end{align*}
which thus heuristically explains the approximation \eqref{eq20230703_30}. 

\item[(b)] 	When $\theta = \frac12$, equation \eqref{eq20230703_31} iterates from $n = 1$ instead of $n=0$. Therefore, $u^1$ is also an initial value and must be specified.  Following \cite{dupont19732,	feng2022higher}  we choose $u^1$ as follows:
\begin{align}\label{eq3.3}
u^1 = \tau v^0 + u^0 -\frac{\tau^2}{2}\Delta u^0 - \frac{\tau^2}{2}F(u^0) - \sigma(u^0)\widehat{\Delta} W_0 + \tau\sigma(u^0)\overline{\Delta} W_0,
\end{align}
where $(u^0,v^0) = (u_0,v_0)$.  With this choice of $u^1$,  it is easy to verify that
\begin{align}\label{eq3.4}
\mE\bigl[\|\Delta(u(t_1) - u^1)\|^2_{L^2}\bigr] \leq C\tau,
\end{align}
where $C>0$ is a constant. This estimate will be used in the proof of Theorem \ref{theorem32} later.

\item[(b)] Our schemes defined by \eqref{theta_scheme} are quite different from those proposed in \cite{feng2022higher}. 
\end{itemize}
\end{remark}

%%%
\subsection{Stability estimates}
This subsection is devoted to the stability analysis for our $\theta$-scheme. Recall that it consists of two schemes corresponding to  $\theta=0$ and $\theta=\frac12$.  To the end, for each $1 \leq n \leq N-1$,  we introduce 
the forward difference operator $\delta_t u^n:= (u^{n+1} - u^n)/\tau$. 

Our main stability results are summarized in the following lemma. 

\begin{lemma}\label{lemma3.1}
Let $\{(u^n, v^n)\}$ denote the numerical solutions generated by the $\theta$-scheme \eqref{eq20230703_30}-\eqref{eq20230703_31}. Under the assumptions ${\bf (A1)-(A3)}$, there exists a constant $C= C(u_0,v_0,D,T)>0$ such that
\begin{enumerate}
\item[(a)] For $\theta =0$,  there holds
\begin{align}\label{ineq3.5}
\max_{1 \leq n \leq N}\mE\Bigl[\|\delta_t u^n\|^2_{L^2} + \|\nabla u^n\|^2_{ L^2} + \|v^n\|^2_{L^2}\Bigr] \leq C.
\end{align}
\item[(b)] For $\theta = \frac12$,  there holds
\begin{align}\label{ineq3.6}
\max_{1 \leq n \leq N}\mE\Bigl[ \|\nabla u^n\|^2_{L^2} + \|v^n\|^2_{L^2}\Bigr] \leq C,
\end{align}
\end{enumerate}
%		where $C = C(u_0,v_0,D,T)$.
\end{lemma}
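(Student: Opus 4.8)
The plan is to derive a one-step energy identity by testing the velocity equation \eqref{eq20230703_31} with a velocity-type test function and using the displacement equation \eqref{eq20230703_30} to recast the stiffness coupling $\tau(\nabla u^{n,\theta},\nabla\phi)$ as a telescoping sum of gradient energies. The two cases use different test functions: for $\theta=0$ I take $\phi=v^{n+1}$, whereas for $\theta=\frac12$ I take $\phi=v^{n+1}+v^n$ so that the midpoint stiffness term telescopes in two steps. In both cases the real work is to arrange the stochastic terms so that their expectations either vanish (the martingale parts) or are bounded by Gronwall-compatible quantities, after which a discrete Gronwall inequality closes the estimate. Throughout I use that $v^n,u^n,\sigma(u^n)$ are $\mathcal{F}_{t_n}$-measurable while $\overline{\Delta}W_n$ and $\widehat{\Delta}W_n$ are independent of $\mathcal{F}_{t_n}$ with zero mean, together with {\bf (A1)}--{\bf (A3)} and Lemma \ref{increment}.

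For $\theta=0$, equation \eqref{eq20230703_30} yields $u^{n+1}-u^n=\tau v^{n+1}-\sigma(u^n)\widehat{\Delta}W_n$ as an identity in $H_0^1$, so replacing $\tau(\nabla u^{n+1},\nabla v^{n+1})$ by $(\nabla u^{n+1},\nabla(u^{n+1}-u^n))+(\nabla u^{n+1},\nabla\sigma(u^n))\widehat{\Delta}W_n$ produces the identity whose left side is $\frac12(\|v^{n+1}\|_{L^2}^2-\|v^n\|_{L^2}^2+\|v^{n+1}-v^n\|_{L^2}^2)+\frac12(\|\nabla u^{n+1}\|_{L^2}^2-\|\nabla u^n\|_{L^2}^2+\|\nabla(u^{n+1}-u^n)\|_{L^2}^2)$. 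The structural gain is the two squared-increment (dissipation) terms on the left. After summing and taking expectation, the martingale contributions $\mE[(\sigma(u^n)\overline{\Delta}W_n,v^n)]$ and $\mE[(\nabla u^n,\nabla\sigma(u^n))\widehat{\Delta}W_n]$ vanish, and the leftover cross terms $(\sigma(u^n)\overline{\Delta}W_n,v^{n+1}-v^n)$ and $(\nabla(u^{n+1}-u^n),\nabla\sigma(u^n))\widehat{\Delta}W_n$ are handled by Young's inequality: their quadratic halves are absorbed into the dissipation terms, and the remaining halves are estimated by $\mE[\|\sigma(u^n)\|_{L^2}^2]\tau$ and $\mE[\|\nabla\sigma(u^n)\|_{L^2}^2]\,\mE[|\widehat{\Delta}W_n|^2]$ via {\bf (A1)} and Lemma \ref{increment}(ii). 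Discrete Gronwall then gives \eqref{ineq3.5}, and $\|\delta_t u^n\|_{L^2}$ is recovered directly from \eqref{eq20230703_30} through $\delta_t u^n=v^{n+1}-\tau^{-1}\sigma(u^n)\widehat{\Delta}W_n$ and Lemma \ref{increment}(ii).

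For $\theta=\frac12$, adding two consecutive instances of \eqref{eq20230703_30} gives $\tau(v^{n+1}+v^n)=u^{n+1}-u^{n-1}+\sigma(u^n)\widehat{\Delta}W_n+\sigma(u^{n-1})\widehat{\Delta}W_{n-1}$, so testing \eqref{eq20230703_31} with $\phi=v^{n+1}+v^n$ turns the midpoint stiffness term into $\frac12(\|\nabla u^{n+1}\|_{L^2}^2-\|\nabla u^{n-1}\|_{L^2}^2)$ plus gradient--noise cross terms and the inertial term into $\|v^{n+1}\|_{L^2}^2-\|v^n\|_{L^2}^2$. The main obstacle of the lemma is precisely that this midpoint telescoping produces \emph{no} squared-increment dissipation, so the stochastic inner products against $v^{n+1}$ can no longer be absorbed as in the case $\theta=0$. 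To handle $\mE[(\sigma(u^n)\overline{\Delta}W_n,v^{n+1})]$ and $\mE[(D_u\sigma(u^n)v^n\widehat{\Delta}W_n,v^{n+1})]$ I drop their (vanishing) $v^n$ components and then substitute the velocity equation \eqref{eq20230703_31} for $v^{n+1}-v^n$ by testing it with the stochastic functions $\sigma(u^n)\overline{\Delta}W_n$ and $D_u\sigma(u^n)v^n\widehat{\Delta}W_n$. This extracts the genuine It\^o corrections $\mE[\|\sigma(u^n)\|_{L^2}^2]\tau$ and a higher-order analogue, while the $\tau$-prefactored residuals such as $-\tau\mE[(\nabla u^{n,\frac12},\nabla\sigma(u^n))\overline{\Delta}W_n]$ are controlled by Cauchy--Schwarz and Young so that the $C\tau\,\mE[\|\nabla u^{n+1}\|_{L^2}^2]$ parts become Gronwall terms and the rest is $\mathcal{O}(\tau^2)$ by {\bf (A1)} and Lemma \ref{increment}(ii); the key point is that the $\tau$ factor prevents any uncontrolled $\|v\|_{H^1}$-type quantity from surviving.

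The interplay between the diffusion $\sigma(u^n)\overline{\Delta}W_n$ and the Milstein-type correction $D_u\sigma(u^n)v^n\widehat{\Delta}W_n$ is the most delicate residual: its cross contribution factors as $\mE[(D_u\sigma(u^n)v^n,\sigma(u^n))]\,\mE[\overline{\Delta}W_n\widehat{\Delta}W_n]$, and since $\mE[\overline{\Delta}W_n\widehat{\Delta}W_n]=\mathcal{O}(\tau^2)$ the boundedness of $D_u\sigma$ from {\bf (A3)} leaves only $\mathcal{O}(\tau^2)\bigl(1+\mE[\|v^n\|_{L^2}^2+\|u^n\|_{L^2}^2]\bigr)$. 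After summing, bounding the drift terms $\tau(F^{n,\frac12},v^{n+1}+v^n)$ by Young, and absorbing the initial layer through the choice \eqref{eq3.3}--\eqref{eq3.4} of $u^1$, every remaining term is either $\mathcal{O}(\tau)$ or of the form $C\tau\,\mE[\|\nabla u^{k}\|_{L^2}^2+\|v^k\|_{L^2}^2]$, so the discrete Gronwall inequality yields \eqref{ineq3.6}.
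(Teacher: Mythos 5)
Your part (a) is correct and is essentially the paper's argument written in mixed form: the paper eliminates $v^{n+1}$ via \eqref{eq20230703_30} and tests the resulting displacement equation with $\delta_t u^{n+1}$, while you keep the mixed system and test \eqref{eq20230703_31} with $v^{n+1}$. Since $v^{n+1}=\delta_t u^{n+1}+\tau^{-1}\sigma(u^n)\widehat{\Delta}W_n$, the two energy identities are equivalent, the same squared-increment dissipation terms absorb the same noise cross terms, and your recovery of $\|\delta_t u^n\|_{L^2}$ from \eqref{eq20230703_30} at the end simply reverses the paper's recovery of $\|v^n\|_{L^2}$.

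In part (b) your skeleton also coincides with the paper's: your test function $v^{n+1}+v^n$ is, modulo the noise terms of \eqref{eq20230703_30}, the paper's choice $(u^{n+1}-u^{n-1})/\tau$, and your treatment of $\mE[(\sigma(u^n)\overline{\Delta}W_n,v^{n+1})]$ --- drop the vanishing $v^n$ part, then substitute \eqref{eq20230703_31} tested with $\sigma(u^n)\overline{\Delta}W_n$ to extract the It\^o correction $\tau\,\mE[\|\sigma(u^n)\|_{L^2}^2]$ --- is exactly the paper's $\mathcal{I}_2$ step. The genuine gap is your handling of the Milstein term: you propose to substitute \eqref{eq20230703_31} tested with $D_u\sigma(u^n)v^n\widehat{\Delta}W_n$, which generates the stiffness residual $\tau\bigl(\nabla u^{n,\frac12},\nabla\bigl(D_u\sigma(u^n)v^n\bigr)\widehat{\Delta}W_n\bigr)$. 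By {\bf (A3)} this is controlled only through $\|v^n\|_{H^1}$, a quantity that \eqref{ineq3.6} does not bound and for which the scheme gives no uniform estimate (a priori only $\|\nabla v^n\|_{L^2}\le C\tau^{-1}(\cdot)$ from \eqref{eq20230703_30}). Your assertion that ``the $\tau$ factor prevents any uncontrolled $\|v\|_{H^1}$-type quantity from surviving'' is therefore unjustified as stated: the available prefactors give only $C\tau^{5/2}\bigl(\mE[\|\nabla u^{n,\frac12}\|_{L^2}^2]\bigr)^{\frac12}\bigl(\mE[\|\nabla v^n\|_{L^2}^2]\bigr)^{\frac12}$, which is not Gronwall-compatible without an additional bound on $\mE[\|\nabla v^n\|_{L^2}^2]$ that you never establish.

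The correct repair is to not substitute at all for this term. Unlike $\overline{\Delta}W_n$, whose $\mathcal{O}(\tau^{1/2})$ smallness is insufficient and so forces the It\^o-correction extraction, the increment $\widehat{\Delta}W_n$ already satisfies $\mE[|\widehat{\Delta}W_n|^2]\le C\tau^3$; hence, after dropping the conditionally centered $v^n$ component, Cauchy--Schwarz and independence give directly $|\mE[(D_u\sigma(u^n)v^n\widehat{\Delta}W_n,v^{n+1})]|\le C\tau^{3/2}\,\mE\bigl[\|v^n\|_{L^2}^2+\|v^{n+1}\|_{L^2}^2\bigr]$, which is summable. This is precisely why the paper, in bounding its term $\mathcal{I}_3$, only ever inserts the gradient-free displacement equation \eqref{eq20230703_30} and never differentiates $D_u\sigma(u^n)v^n$. (A patched version of your route does exist --- combine the crude inverse bound on $\|\nabla v^n\|_{L^2}$ above with the $\tau^{5/2}$ prefactor to obtain a summable $\mathcal{O}(\tau^{3/2})$ per step --- but since you neither state nor use such a bound, the step fails as written.) Your remaining points, including the factorization $\mE[(D_u\sigma(u^n)v^n,\sigma(u^n))]\,\mE[\overline{\Delta}W_n\widehat{\Delta}W_n]=\mathcal{O}(\tau^2)$ and the treatment of the initial layer via \eqref{eq3.3}, are sound.
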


\begin{proof}
To show \eqref{ineq3.5}, substituting \eqref{eq20230703_30} into \eqref{eq20230703_31}, we obtain
\begin{align}\label{eq3.7}
&\bigl(\delta_t u^{n+1} - \delta_tu^n, \phi\bigr) + \tau\bigl(\nabla u^{n+1},\nabla \phi\bigr) = \tau \bigl(F(u^{n+1}),\phi \bigr)  \\\nonumber
&\qquad + \bigl(\sigma(u^n)\overline{\Delta} W_n, \phi\bigr)- \frac{1}{\tau}\bigl(\sigma(u^n)\widehat{\Delta} W_n,\phi\bigr) + \frac{1}{\tau}\bigl(\sigma(u^{n-1})\widehat{\Delta}W_{n-1},\phi\bigr).
\end{align}
Taking $\phi = \delta_t u^{n+1}$ in \eqref{eq3.7} yields 
\begin{align}\label{eq3.8}
\bigl(\delta_t u^{n+1} &- \delta_tu^n, \delta_tu^{n+1}\bigr) + \bigl(\nabla u^{n+1},\nabla (u^{n+1} - u^n)\bigr)\\\nonumber
&  = \tau \bigl(F(u^{n+1}),\delta_tu^{n+1} \bigr) + \bigl(\sigma(u^n)\overline{\Delta} W_n, \delta_tu^{n+1}\bigr)\\\nonumber
&\qquad- \frac{1}{\tau}\bigl(\sigma(u^n)\widehat{\Delta} W_n,\delta_tu^{n+1}\bigr)+ \frac{1}{\tau}\bigl(\sigma(u^{n-1})\widehat{\Delta}W_{n-1},\delta_tu^{n+1}\bigr)\\\nonumber
& = I + II + III + IV.
\end{align}
The left-hand side of \eqref{eq3.8} can be written as follows:
\begin{align*}
\bigl(\delta_t u^{n+1} - \delta_tu^n, \delta_tu^{n+1}\bigr) &= \frac12\bigl[\|\delta_tu^{n+1}\|^2_{L^2} - \|\delta_tu^n\|^2_{L^2} + \|\delta_tu^{n+1} - \delta_tu^n\|^2_{L^2}\bigr],\\\nonumber
\bigl(\nabla u^{n+1},\nabla (u^{n+1} - u^n)\bigr) &= \frac{1}{2}\bigl[\|\nabla u^{n+1}\|^2_{L^2} - \|\nabla u^n\|^2_{L^2} + \|\nabla(u^{n+1} - u^n)\|^2_{l^2}\bigr].
\end{align*}

Next, we estimate the right-hand side of \eqref{eq3.8} from above. 
By ${\bf(A2)}$, we have
\begin{align*}
\mE[I] &\leq \frac{C\tau}{2}\mE\bigl[\|u^{n+1}\|^2_{L^2}\bigr] + \frac{\tau}{2}\mE\bigl[\|\delta_tu^{n+1}\|^2_{L^2}\bigr].
\end{align*}	
Using the fact that $\mE\bigl[\bigl(\sigma(u^n)\widehat{\Delta} W_n, \delta_tu^n\bigr)\bigr] =0$ and ${\bf(A2)}$, we get 
\begin{align*}
\mE[II] &= \mE\bigl[\bigl(\sigma(u^n)\overline{\Delta} W_n, \delta_tu^{n+1} - \delta_tu^n\bigr) \bigr]\\\nonumber
&\leq C\tau\mE\bigl[\|u^n\|^2_{L^2}\bigr] + \frac18\|\delta_tu^{n+1} - \delta_tu^n\|^2_{L^2}.
\end{align*}
Similarly, we have
\begin{align*}
\mE[III + IV] &= -\frac{1}{\tau}\mE\bigl[\bigl(\sigma(u^n)\widehat{\Delta}W_n, \delta_tu^{n+1} - \delta_tu^n\bigr)\bigr] \\\nonumber
&\qquad + \frac{1}{\tau}\mE\bigl[\bigl(\sigma(u^{n-1})\widehat{\Delta}W_{n-1}, \delta_tu^{n+1} - \delta_tu^n\bigr)\bigr] \\\nonumber
&\qquad \frac{1}{\tau}\mE\bigl[\bigl(\sigma(u^{n-1})\widehat{\Delta}W_{n-1}, \delta_tu^{n} - \delta_tu^{n-1}\bigr)\bigr].
\end{align*}
By ${\bf (A2)}$ and Remark $\ref{increment}(ii)_2$ for any $n \geq 1$, we obtain
\begin{align*}
\mE[III + IV] \leq C\tau\mE\bigl[\|u^n\|^2_{L^2} + \|u^{n-1}\|^2_{L^2}\bigr] &+ \frac{1}{8}\mE\bigl[\|\delta_tu^{n+1} - \delta_tu^n\|^2_{L^2}\bigr] \\\nonumber
&+ \frac{1}{8}\mE\bigl[\|\delta_tu^{n} - \delta_tu^{n-1}\|^2_{L^2}\bigr].
\end{align*}

Collecting the above  estimates for $I-IV$ and substituting them into \eqref{eq3.8},  we have 
\begin{align}\label{eq3.9}
&\frac12\mE\bigl[\|\delta_tu^{n+1}\|^2_{L^2} - \|\delta_tu^n\|^2_{L^2}\bigr] + \frac14\mE\bigl[\|\delta_tu^{n+1} - \delta_tu^{n}\|^2_{L^2}\bigr] \\\nonumber
&\quad + \frac12\mE\bigl[\|\nabla u^{n+1}\|^2_{L^2} - \|\nabla u^n\|^2_{L^2}\bigr] + \frac12\mE\bigl[\|\nabla(u^{n+1} - u^n)\|^2_{L^2}\bigr] \\\nonumber
&\qquad \leq C\tau \mE\bigl[\|u^{n+1}\|^2_{L^2} + \|u^n\|^2_{L^2} + \|u^{n-1}\|^2_{L^2}\bigr] + C\tau\mE\bigl[\|\delta_tu^{n+1}\|^2_{L^2}\bigr] \\\nonumber
&\quad \qquad+ \frac{1}{8}\mE\bigl[\|\delta_tu^{n} - \delta_tu^{n-1}\|^2_{L^2}\bigr].
\end{align}
Applying the summation operator $\sum_{n = 1}^{\ell}$ ($0\leq \ell\leq N - 1$) to \eqref{eq3.9} leads to 
\begin{align}\label{eq3.10}
&\mE\bigl[\|\delta_tu^{\ell+1}\|^2_{L^2} \bigr] +\mE\bigl[\|\nabla u^{\ell+1}\|^2_{L^2}\bigr] \\\nonumber
&\quad + \sum_{n = 1}^{\ell} \mE\Bigl[\|\delta_tu^{n+1} - \delta_tu^n\|^2_{L^2} + \|\nabla(u^{n+1} - u^n)\|^2_{L^2}\Bigr]\\\nonumber
&\qquad \leq C\mE\bigl[\|\nabla u^0\|^2_{L^2} \bigr] + C\tau\sum_{n = 1}^{\ell}\mE\Bigl[\|\delta_tu^{n+1}\|^2_{L^2} + \|\nabla u^{n+1}\|^2_{L^2}\Bigr].
\end{align}

Then, $\eqref{ineq3.5}_1, \eqref{ineq3.5}_2$ are followed by applying the discrete Gronwall's inequality to \eqref{eq3.10}.  Finally,  $\eqref{ineq3.5}_3$ follows from \eqref{eq20230703_30} with help of $\eqref{ineq3.5}_1$ and $\eqref{ineq3.5}_2$. 

\medskip
To show \eqref{ineq3.6}, noticing that \eqref{eq20230703_30} can be rewritten as
\begin{align*}
\Bigl(\frac{u^{n+1} - u^{n-1}}{\tau},\phi\Bigr) &=\Bigl( \frac{u^{n+1} - u^n}{\tau} + \frac{u^{n} - u^{n-1}}{\tau},\phi\Bigr)\\
&= \bigl(v^{n+1} + v^n,\phi\bigr) -\frac{1}{\tau} \bigl(\sigma(u^n)\widehat{\Delta}W_n, \phi\bigr) \\\nonumber
&\qquad- \frac{1}{\tau}\bigl(\sigma(u^{n-1})\widehat{\Delta}W_{n-1},\phi\bigr).
\end{align*}
Taking $\phi = (u^{n+1} - u^{n-1})/\tau$ in \eqref{eq20230703_31} with $\theta = \frac12$, we obtain
\begin{align}\label{eq3.11}
&\bigl(v^{n+1} - v^n, v^{n+1} + v^n\bigr)	+ \frac12 \bigl(\nabla(u^{n+1} + u^{n-1}), \nabla(u^{n+1}  - u^{n-1})\bigr)\\\nonumber
&= \frac{1}{\tau}\bigl(\sigma(u^n)\widehat{\Delta}W_n + \sigma(u^{n-1})\widehat{\Delta}W_{n-1}, v^{n+1} - v^n\bigr) \\\nonumber
&\qquad+ \Bigl(\sigma(u^n)\overline{\Delta}W_n, \frac{u^{n+1}- u^{n-1}}{\tau}  \Bigr) \\\nonumber
&\qquad+ \Bigl(D_u\sigma(u^n)v^n \widehat{\Delta}W_n , \frac{u^{n+1} - u^{n-1}}{\tau}\Bigr)\\\nonumber
&\qquad + \frac{\tau}{2}\Bigl(F(u^{n+1}) + F(u^{n-1}), \frac{u^{n+1} - u^{n-1}}{\tau}\Bigr)\\\nonumber
&= \mathcal{I}_1 + \mathcal{I}_2 + \mathcal{I}_3 + \mathcal{I}_4.
\end{align}

The left-hand side of \eqref{eq3.11} can be written as
\begin{align*}
\bigl[\|v^{n+1}\|^2_{L^2} - \|v^n\|^2_{L^2}\bigr] + \frac12 \bigl[\|\nabla u^{n+1}\|^2_{L^2} - \|\nabla u^{n-1}\|^2_{L^2}\bigr].
\end{align*}

Next,  we bound the right-hand side of \eqref{eq3.11} from above.  To the end,  choosing $\phi = \sigma(u^n)\widehat{\Delta}W_n + \sigma(u^{n-1})\widehat{\Delta}W_{n-1}$ in \eqref{eq20230703_31} and then substituting it into $\mathcal{I}_1$ we get
\begin{align*}
\mE[\mathcal{I}_1] &= \frac{1}{\tau}\mE\bigl[\bigl(\sigma(u^n)\widehat{\Delta}W_n + \sigma(u^{n-1})\widehat{\Delta}W_{n-1}, v^{n+1} - v^n\bigr)\bigr] \\\nonumber
& = -\frac12\mE\Bigl[\bigl(\nabla(u^{n+1} + u^{n-1}), \nabla\bigl(\sigma(u^n)\widehat{\Delta}W_n + \sigma(u^{n-1})\widehat{\Delta}W_{n-1}\bigr)\bigr)\Bigr]\\\nonumber
&\qquad+ \frac12\mE\Bigl[\bigl(F(u^{n+1}) + F(u^{n-1}), \sigma(u^n)\widehat{\Delta}W_n + \sigma(u^{n-1})\widehat{\Delta}W_{n-1}\bigr)\Bigr]\\\nonumber
&\qquad +\frac{1}{\tau}\mE\Bigl[\bigl(\sigma(u^n)\overline{\Delta}W_{n}, \sigma(u^n)\widehat{\Delta}W_n + \sigma(u^{n-1})\widehat{\Delta}W_{n-1}\bigr)\Bigr] \\\nonumber
&\qquad \qquad+\frac{1}{\tau}\mE\Bigl[\bigl(D_u\sigma(u^n)v^n\widehat{\Delta}W_n, \sigma(u^n)\widehat{\Delta}W_n + \sigma(u^{n-1})\widehat{\Delta}W_{n-1}\bigr) \Bigr]\\\nonumber
&= \mathcal{I}_{1,1} +  \mathcal{I}_{1,2} +  \mathcal{I}_{1,3} +  \mathcal{I}_{1,4}.   
\end{align*}

It follows from Remark $\ref{increment}(\rm ii)_2$ and ${\bf (A1)}$ that
\begin{align*}
\mathcal{I}_{1,1} &\leq C\tau \mE\bigl[\|\nabla u^{n+1}\|^2_{L^2} + \|\nabla u^{n-1}\|^2_{L^2}\bigr]\\\nonumber
&\qquad + \frac{1}{\tau} \mE\bigl[\|\nabla\bigl(\sigma(u^n)\widehat{\Delta}W_n + \sigma(u^{n-1})\widehat{\Delta}W_{n-1}\bigr)\|^2_{L^2}\bigr]\\\nonumber
&\leq C\tau \mE\bigl[\|\nabla u^{n+1}\|^2_{L^2} + \|\nabla u^{n-1}\|^2_{L^2}\bigr] + C\tau^2  \mE\bigl[\|\nabla u^{n+1}\|^2_{L^2} + \|\nabla u^{n-1}\|^2_{L^2}\bigr]\\\nonumber
&\leq C\tau  \mE\bigl[\|\nabla u^{n+1}\|^2_{L^2} + \|\nabla u^{n-1}\|^2_{L^2}\bigr].
\end{align*}
Using ${\bf (A1)}$ and the Poincar\'e inequality, we obtain
\begin{align*}
\mathcal{I}_{1,2} \leq C\tau  \mE\bigl[\|\nabla u^{n+1}\|^2_{L^2} + \|\nabla u^{n-1}\|^2_{L^2}\bigr].
\end{align*}

Similarly, using ${\bf (A1), (A3)},$ and the Poincar\'e inequality, we also get 
\begin{align*}
\mathcal{I}_{1,3} + \mathcal{I}_{1,4} \leq C\tau \mE\bigl[\|v^n\|^2_{L^2} + \|\nabla u^{n}\|^2_{L^2}\bigr].
\end{align*}
To summarize, we obtain 
\begin{align*}
\mE\bigl[\mathcal{I}_1\bigr] \leq C\tau\mE\bigl[\|\nabla u^{n+1}\|^2_{L^2} + \|\nabla u^n\|^2_{L^2} + \|\nabla u^{n-1}\|^2_{L^2}\bigr] + C\tau\mE\bigl[\|v^n\|^2_{L^2}\bigr].
\end{align*}

Now, we continue to bound $\mathcal{I}_2$.  By using the martingale property of the increments $\overline{\Delta}W_n$ and \eqref{eq20230703_31} with $\phi = \sigma(u^n)\overline{\Delta}W_{n}$, we have
\begin{align*}
\mE\bigl[\mathcal{I}_2\bigr] &= \frac{1}{\tau}\mE\bigl[\bigl(\sigma(u^n)\overline{\Delta}W_n, u^{n+1} - u^{n-1}\bigr)\bigr] \\\nonumber
&= \mE\bigl[\bigl(\sigma(u^n)\overline{\Delta}W_n, v^{n+1} + v^n\bigr)\bigr]\\\nonumber
&= \mE\bigl[\bigl(\sigma(u^n)\overline{\Delta}W_n, v^{n+1}\bigr)\bigr]\\\nonumber
&=  \mE\bigl[\bigl(\sigma(u^n)\overline{\Delta}W_n, v^{n+1} - v^n\bigr)\bigr]\\\nonumber
&=  -\frac{\tau}{2} \mE\bigl[\bigl(\nabla\sigma(u^n)\overline{\Delta}W_n, \nabla(u^{n+1} + u^{n-1})\bigr)\bigr] \\\nonumber
&
\qquad+  \frac{\tau}{2}\mE\bigl[\bigl(\sigma(u^n)\overline{\Delta}W_n, F(u^{n+1}) + F(u^{n-1})\bigr)\bigr] + \mE\bigl[\|\sigma(u^n)\overline{\Delta}W_n\|^2_{L^2}\bigr]\\\nonumber
&\qquad+  \mE\bigl[\bigl(\sigma(u^n)\overline{\Delta}W_n, D_u\sigma(u^n)v^n\widehat{\Delta}W_n\bigr)\bigr]\\\nonumber
&=  \mathcal{I}_{2,1}  +  \mathcal{I}_{2,2} +  \mathcal{I}_{2,3} +  \mathcal{I}_{2,4}.   
\end{align*}
Each of $ \mathcal{I}_{2,1},  \mathcal{I}_{2,2}, \mathcal{I}_{2,3},$ and $\mathcal{I}_{2,4}$ can be controlled using the same techniques for bounding  $ \mathcal{I}_{1,1},  \mathcal{I}_{1,2}, \mathcal{I}_{1,3}, $ and $\mathcal{I}_{1,4}$ above (we skip the repetitions to save space). In summary, we have
\begin{align*}
\mE\bigl[\mathcal{I}_2\bigr] \leq C\tau\mE\bigl[\|\nabla u^{n+1}\|^2_{L^2} + \|\nabla u^n\|^2_{L^2} + \|\nabla u^{n-1}\|^2_{L^2}\bigr] + C\tau\mE\bigl[\|v^n\|^2_{L^2}\bigr].
\end{align*}

To bound $\mathcal{I}_3$, using the martingale property of the increments, ${\bf (A1)}$, \eqref{eq20230703_30} with $\varphi = D_u\sigma(u^n)v^n\widehat{\Delta}W_n$, we obtain
\begin{align*}
\mE\bigl[\mathcal{I}_3\bigr] &= \frac{1}{\tau}\mE\bigl[\bigl(D_u\sigma(u^n)v^n\widehat{\Delta}W_n, u^{n+1} - u^n\bigr)\bigr]\\\nonumber
&= \mE\bigl[\bigl(D_u\sigma(u^n)v^n\widehat{\Delta}W_n, v^{n+1}\bigr)\bigr] - \frac{1}{\tau}\mE\bigl[\bigl(D_u\sigma(u^n)v^n\widehat{\Delta}W_n, \sigma(u^n)\widehat{\Delta}W_n\bigr)\bigr]\\\nonumber
&\leq C\tau\mE\bigl[\|v^{n+1}\|^2_{L^2} + \|v^n\|^2_{L^2}\bigr] + C\tau\mE\bigl[\|\nabla u^n\|^2_{L^2}\bigr].
\end{align*}

Finally, we bound $\mathcal{I}_4$ as follows.  Using \eqref{eq20230703_30} with test function  $\varphi = F(u^{n+1}) + F(u^{n-1})$ and applying ${\bf (A1)}$ yield
\begin{align}
\mE\bigl[\mathcal{I}_4\bigr] &= \frac{\tau}{2}\mE\bigl[\bigl(F(u^{n+1}) + F(u^{n-1}), v^{n+1} + v^n\bigr)\bigr] \\\nonumber
&\qquad- \frac12\mE\bigl[\bigl(F(u^{n+1}) + F(u^{n-1}), \sigma(u^n)\widehat{\Delta}W_n\bigr)\bigr] \\\nonumber
&\qquad- \frac12\mE\bigl[\bigl(F(u^{n+1}) + F(u^{n-1}), \sigma(u^{n-1})\widehat{\Delta}W_{n-1}\bigr)\bigr]\\\nonumber
&\leq C\tau\mE\bigl[\|v^{n+1}\|^2_{L^2} + \|v^n\|^2_{L^2}\bigr] \\\nonumber
&\qquad+ C\tau\mE\bigl[\|\nabla u^{n+1}\|^2_{L^2} + \|\nabla u^n\|^2_{l^2} + \|\nabla u^{n-1}\|^2_{L^2}\bigr].
\end{align} 

Collecting all the estimates for $\mathcal{I}_1, \mathcal{I}_2, \mathcal{I}_3, \mathcal{I}_4$ and substituting them into \eqref{eq3.11} followed  by taking summation from $n = 0$ to $n = \ell$ for some $0 \leq \ell <N$, we get
\begin{align}\label{eq3.13}
\mE\bigl[\|v^{\ell +1}\|^2_{L^2} + \|\nabla u^{\ell+1}\|^2_{L^2}\bigr] &\leq C\mE\bigl[\|v^0\|^2_{L^2} + \|\nabla u^0\|^2_{L^2}\bigr] \\\nonumber
&\,\,+ \tau\sum_{n=0}^{\ell}\mE\bigl[\|v^{n+1}\|^2_{L^2} + \|v^n\|^2_{L^2}\bigr]\\\nonumber
&\,\,+\tau\sum_{n=0}^{\ell}\mE\bigl[\|\nabla u^{n+1}\|^2_{L^2} + \|\nabla u^n\|^2_{L^2} + \|\nabla u^{n-1}\|^2_{L^2}\bigr].
\end{align}
The proof is complete by applying the discrete Gronwall's inequality to \eqref{eq3.13}.
\end{proof}

%%%%%
%\subsection{Error estimates}

\section{Error estimates}\label{sec-4} 
%In this subsection, for each $n = 0,1,2,...,N$, we denote the errors as $e_u^n := u(t_n)-u^n$ and $e_v^n := v(t_n) - v^n$.

This section is devoted to establishing the main results of this paper, which are concerned with rates of convergence for two time-discrete schemes proposed in the previous section. This will be done separately because different techniques will be needed for $\theta=0$ and $\theta=\frac12$. 

\subsection{Error estimates for the scheme with $\theta =0$}
Before stating our error estimate results, we need a couple of enhanced  H\"older continuity estimates for the variational solution $(u,v)$. We note that these results are new and of independent interest. 

\begin{lemma}\label{lem20230703_7}
Let $(u,v)$ be the variational solution of \eqref{sac_s_mixed} and let $(u^n,v^n)$ be the solution of the $\theta$-scheme with $\theta = 0$. Then, there holds the following estimate:
\begin{align*}
\E\bigl[\|\tau v\left(t_{n+1}\right)-\left[u\left(t_{n+1}\right)-u\left(t_n\right)\right] &-\sigma({u^{n}})\widehat{\Delta} W_n\|_{L^2}^2\bigr]\\
&\le C\tau^3\E\left[\|e_u^n\|_{L^2}^2+\|e_v^n\|_{L^2}^2\right]+C\tau^4, \notag
\end{align*}
where $e_u^n := u(t_n)-u^n$ and $e_v^n := v(t_n) - v^n$ for $n = 0,1,2,\cdots N$.
\end{lemma}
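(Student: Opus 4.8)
The plan is to regard the quantity
$Q := \tau v(t_{n+1}) - [u(t_{n+1}) - u(t_n)] - \sigma(u^n)\widehat{\Delta}W_n$
as a consistency error and to reduce it, via the integral form of \eqref{sac_s_mixed}, to a sum of pieces each controlled by the H\"older continuity of Lemma \ref{lemma2.3}, the increment bounds of Lemma \ref{increment}, or the Lipschitz assumption {\bf (A2)}. First I would rewrite the deterministic difference exactly: since $u(t_{n+1}) - u(t_n) = \int_{t_n}^{t_{n+1}} v(s)\,ds$ and $\tau v(t_{n+1}) = \int_{t_n}^{t_{n+1}} v(t_{n+1})\,ds$, we obtain $\tau v(t_{n+1}) - [u(t_{n+1}) - u(t_n)] = \int_{t_n}^{t_{n+1}}[v(t_{n+1}) - v(s)]\,ds$. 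Substituting the integrated velocity equation $v(t_{n+1}) - v(s) = \int_s^{t_{n+1}}[\Delta u(\lambda) + F(u(\lambda))]\,d\lambda + \int_s^{t_{n+1}}\sigma(u(\lambda))\,dW(\lambda)$ and applying the stochastic Fubini theorem collapses the iterated stochastic integral into the single It\^o integral $\int_{t_n}^{t_{n+1}}(\lambda - t_n)\sigma(u(\lambda))\,dW(\lambda)$. Crucially, expanding around $v(t_{n+1})$ rather than $v(t_n)$ (as in Remark \ref{remark1}(a)) is what keeps the deterministic part at order $\mathcal{O}(\tau^4)$; an expansion around $v(t_n)$ would leave a term $\tau[v(t_{n+1})-v(t_n)]$ of size only $\mathcal{O}(\tau^{3/2})$ in $L^2(\Omega;L^2)$, which is why the scheme \eqref{eq20230703_30} is written with $v^{n+1}$.

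The key algebraic observation is the identity $\int_{t_n}^{t_{n+1}}(\lambda - t_n)\,dW(\lambda) = \widetilde{\Delta}W_n$ (again by stochastic Fubini applied to the definition of $\widetilde{\Delta}W_n$). This lets me split $Q = A + B_1 + B_2 + B_3$ by first freezing $\sigma(u(\lambda))$ at $\lambda=t_n$, then exchanging $\widetilde{\Delta}W_n$ for $\widehat{\Delta}W_n$, then replacing the exact solution by the numerical one:
\begin{align*}
A &:= \int_{t_n}^{t_{n+1}}\int_s^{t_{n+1}}\bigl[\Delta u(\lambda) + F(u(\lambda))\bigr]\,d\lambda\,ds, \\
B_1 &:= \int_{t_n}^{t_{n+1}}(\lambda - t_n)\bigl[\sigma(u(\lambda)) - \sigma(u(t_n))\bigr]\,dW(\lambda), \\
B_2 &:= \sigma(u(t_n))\bigl[\widetilde{\Delta}W_n - \widehat{\Delta}W_n\bigr], \\
B_3 &:= \bigl[\sigma(u(t_n)) - \sigma(u^n)\bigr]\widehat{\Delta}W_n.
\end{align*}

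Using $\|Q\|_{L^2}^2 \le 4\bigl(\|A\|_{L^2}^2 + \|B_1\|_{L^2}^2 + \|B_2\|_{L^2}^2 + \|B_3\|_{L^2}^2\bigr)$, I would bound the four terms separately. For $A$, Cauchy--Schwarz over the triangular region of area $\tau^2/2$ combined with the a priori regularity bound $\E[\sup_\lambda\|\Delta u(\lambda)+F(u(\lambda))\|_{L^2}^2]\le C$ (valid under the $H^2$-type data hypothesis, together with {\bf (A1)}) gives $\E[\|A\|_{L^2}^2]\le C\tau^4$. For $B_1$, the It\^o isometry, {\bf (A2)}, and the $L^2$-H\"older estimate Lemma \ref{lemma2.3}(i) yield $\E[\|B_1\|_{L^2}^2]\le \int_{t_n}^{t_{n+1}}(\lambda-t_n)^2\,C|\lambda-t_n|^2\,d\lambda \le C\tau^5$. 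For $B_2$, since $\sigma(u(t_n))$ is $\mathcal{F}_{t_n}$-measurable while $\widetilde{\Delta}W_n-\widehat{\Delta}W_n$ depends only on post-$t_n$ increments of $W$, the two factors are independent, so {\bf (A1)} and Lemma \ref{increment}(i) give $\E[\|B_2\|_{L^2}^2]\le C\,\E[|\widetilde{\Delta}W_n-\widehat{\Delta}W_n|^2]\le C\tau^5$. For $B_3$, the same independence lets the expectation factor, and {\bf (A2)} with Lemma \ref{increment}(ii) give $\E[\|B_3\|_{L^2}^2]\le C\,\E[\|e_u^n\|_{L^2}^2]\,\E[|\widehat{\Delta}W_n|^2]\le C\tau^3\,\E[\|e_u^n\|_{L^2}^2]$. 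Summing produces $\E[\|Q\|_{L^2}^2]\le C\tau^3\,\E[\|e_u^n\|_{L^2}^2] + C\tau^4$, which is in fact stronger than the stated bound (the $\|e_v^n\|_{L^2}^2$ term on the right-hand side is only needed to match the uniform form used later, and may simply be added).

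The step I expect to require the most care is the passage from the iterated integrals to the single-increment form: one must verify the integrability conditions for stochastic Fubini both to collapse $\int_{t_n}^{t_{n+1}}\!\int_s^{t_{n+1}}\sigma(u(\lambda))\,dW(\lambda)\,ds$ into $\int_{t_n}^{t_{n+1}}(\lambda-t_n)\sigma(u(\lambda))\,dW(\lambda)$ and to identify $\int_{t_n}^{t_{n+1}}(\lambda-t_n)\,dW(\lambda)=\widetilde{\Delta}W_n$, and then track carefully the $\mathcal{F}_{t_n}$-measurability of $\sigma(u(t_n))$ and $\sigma(u^n)$ against the independence of $\widehat{\Delta}W_n$ and $\widetilde{\Delta}W_n-\widehat{\Delta}W_n$, which is exactly what makes the products in $B_2$ and $B_3$ factor and keeps the error off the diagonal $\mathcal{O}(\tau^3)$ scale down to the desired order.
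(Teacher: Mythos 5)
Your proof is correct and follows essentially the same route as the paper's: the paper likewise expands $\tau v(t_{n+1})-[u(t_{n+1})-u(t_n)]$ via the integrated mixed system into the double integrals $\int_{t_n}^{t_{n+1}}\int_s^{t_{n+1}}$, adds and subtracts $\sigma(u(t_n))$ and $\sigma(u^n)$, and bounds the resulting pieces using Lemma \ref{lemma2.3}, Lemma \ref{increment}, the measurability/independence structure of $\widetilde{\Delta}W_n$, $\widehat{\Delta}W_n$, and assumptions $({\bf A1})$--$({\bf A2})$. Your only departures are cosmetic: you collapse the iterated stochastic integral by stochastic Fubini into $\int_{t_n}^{t_{n+1}}(\lambda-t_n)\sigma(u(\lambda))\,dW(\lambda)$ before estimating, where the paper keeps the double-integral form, and you correctly observe that the $\|e_v^n\|_{L^2}^2$ term in the stated bound is not actually needed by the argument.
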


\begin{proof}
%	Applying integration by part consecutively to \eqref{eq20230423_1}, we get
%	\begin{align*}
%	v\left(t_{n+1}\right)-v(s) &= \int_s^{t_{n+1}} \bigl[ \Delta u(\lambda)+F(u(\lambda)) \bigr]\,d\lambda + \int_s^{t_{n+1}} \sigma(u(\lambda)) d W_\lambda, \\
%	\int_{t_n}^{t_{n+1}} \bigl[ \left(t_{n+1}\right)-v(s) \bigr] \, ds & =\int_{t_n}^{t_{n+1}}\int_s^{t_{n+1}} \bigl[ \Delta u(\lambda)+F(u(\lambda))d\lambda \bigr]\, ds \\\nonumber
%	&\qquad\qquad+ \int_{t_n}^{t_{n+1}} \int_s^{t_{n+1}} \sigma(u(\lambda))  d W_\lambda d s, \\
%	\tau v\left(t_{n+1}\right)-\left[u\left(t_{n+1}\right)-u\left(t_n\right)\right] & =\int_{t_n}^{t_{n+1}}\int_s^{t_{n+1}} \Delta u(\lambda)+F(u(\lambda))d\lambda ds \\\nonumber
%	&\qquad\qquad+\int_{t_n}^{t_{n+1}} \int_s^{t_{n+1}} \sigma(u(\lambda)) d W_\lambda d s\notag.
%	\end{align*}
By \eqref{enhanced_holder} (with an obvious adjustment) and the definition of $\widehat{\Delta} W_n$, we have
\begin{align*}
\tau v\left(t_{n+1}\right) &-\left[u\left(t_{n+1}\right)-u\left(t_n\right)\right]
%\mp\int_{t_n}^{t_{n+1}} \int_s^{t_{n+1}} \sigma(u(\lambda)) d W_\lambda d s
-\sigma({u^{n}})\widehat{\Delta} W_n\label{eq20230703_1}\\\nonumber
&=\int_{t_n}^{t_{n+1}}\int_s^{t_{n+1}} \bigl[\Delta u(\lambda)+F(u(\lambda))d\lambda \bigr]\,ds\\\nonumber 
&\qquad +\left[\int_{t_n}^{t_{n+1}} \int_s^{t_{n+1}} \sigma(u(\lambda)) d W_\lambda d s-(\sigma({u^{n}})\widehat{\Delta} W_n\right]\notag\\
&=\int_{t_n}^{t_{n+1}}\int_s^{t_{n+1}} \bigl[  \Delta u(\lambda)+F(u(\lambda)) \bigr]\, d\lambda ds\\\nonumber
&\qquad +\int_{t_n}^{t_{n+1}} \int_s^{t_{n+1}} \bigl[ \sigma(u(\lambda))\mp\sigma(u(t_n))-\sigma({u^{n}}) \bigr]\,d W_\lambda d s\notag\\
&\qquad +\sigma({u^{n}})(\widetilde{\Delta}W_n-\widehat{\Delta} W_n)\notag.
\end{align*}
Therefore, by using Lemma \ref{increment}(i), we obtain
\begin{align}
&\E\left[\|\tau v\left(t_{n+1}\right)-\left[u\left(t_{n+1}\right)-u\left(t_n\right)\right]-(\sigma({u^{n}})\widehat{\Delta} W_n\|_{L^2}^2\right]\\
&\hskip 0.85in \le C\tau^3\E\left[\|e_u^n\|_{L^2}^2+\|e_v^n\|_{L^2}^2\right] +C\tau^4.\nonumber
\end{align}
The proof is complete. 
\end{proof}

\begin{lemma}\label{lem20230703_1}
Let $(u,v)$ be the variational solution of \eqref{sac_s_mixed} and $(u^n,v^n)$ be the solution of the $\theta$-scheme with $\theta = 0$. Then, there holds the following estimate: 
\begin{align*}
&\E\left[\|\tau \nabla v\left(t_{n+1}\right)-\nabla\left[u\left(t_{n+1}\right)-u\left(t_n\right)\right]-\nabla\sigma({u^{n}})\widehat{\Delta} W_n\|_{L^2}^2\right]\\
&\hskip 0.9in  \le C\tau^3\E\left[\|\nabla e_u^n\|_{L^2}^2+\|\nabla e_v^n\|_{L^2}^2\right]+C\tau^4.\notag
\end{align*}
\end{lemma}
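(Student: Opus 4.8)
The plan is to establish this gradient estimate by mirroring, term by term, the argument already used for Lemma~\ref{lem20230703_7}, with every quantity replaced by its spatial gradient and with the structure assumptions \textbf{(A1)}--\textbf{(A2)} invoked in the $H^1$-norm (which they permit, since $\mathcal{K}=H^1(D)$ is an admissible choice) rather than in the $L^2$-norm. The price to be paid is one additional order of spatial regularity: whereas Lemma~\ref{lem20230703_7} rests on the $L^2$/$H^1$ H\"older bounds of Lemma~\ref{lemma2.3}(i)--(ii), the present version will call on the $H^1$/$H^2$ H\"older bounds of Lemma~\ref{lemma2.3}(ii)--(iii) together with the uniform-in-time regularity $\E\|\nabla\Delta u(\lambda)\|_{L^2}^2\le C$, which is available when the data are $H^3$-regular.

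First I would apply $\nabla$ to the (adjusted) identity \eqref{enhanced_holder} exploited in the proof of Lemma~\ref{lem20230703_7}, which splits the target quantity into three pieces:
\begin{align*}
\tau \nabla v(t_{n+1}) &- \nabla[u(t_{n+1})-u(t_n)] - \nabla\sigma(u^n)\widehat{\Delta}W_n \\
&= \int_{t_n}^{t_{n+1}}\int_s^{t_{n+1}} \nabla\bigl[\Delta u(\lambda)+F(u(\lambda))\bigr]\,d\lambda\,ds \\
&\quad + \int_{t_n}^{t_{n+1}}\int_s^{t_{n+1}} \nabla\bigl[\sigma(u(\lambda))\mp\sigma(u(t_n))-\sigma(u^n)\bigr]\,dW_\lambda\,ds \\
&\quad + \nabla\sigma(u^n)(\widetilde{\Delta}W_n-\widehat{\Delta}W_n) \\
&=: J_1 + J_2 + J_3,
\end{align*}
the only change from Lemma~\ref{lem20230703_7} being the $\nabla$ in front of each integrand. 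I would then bound $\E\|J_i\|_{L^2}^2$ separately.

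For $J_1$, a Cauchy--Schwarz estimate over the triangular domain $\{t_n\le s\le\lambda\le t_{n+1}\}$ of area $\tau^2/2$ gives $\E\|J_1\|_{L^2}^2\le C\tau^4\sup_\lambda\E\bigl[\|\nabla\Delta u(\lambda)\|_{L^2}^2+\|\nabla F(u(\lambda))\|_{L^2}^2\bigr]\le C\tau^4$, where $\|\nabla F(u)\|_{L^2}\le C\|\nabla u\|_{L^2}$ by \textbf{(A3)} and the remaining factor is bounded by $H^3$-regularity. For $J_2$, I would first invoke stochastic Fubini to rewrite the double stochastic integral as the single integral $\int_{t_n}^{t_{n+1}}(\lambda-t_n)\nabla[\cdots]\,dW_\lambda$ and then apply the It\^o isometry. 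Splitting the integrand: the part $\nabla[\sigma(u(\lambda))-\sigma(u(t_n))]$ is controlled via \textbf{(A2)} in $H^1$ and the $H^1$-H\"older bound $\E\|u(\lambda)-u(t_n)\|_{H^1}^2\le C\tau^2$ of Lemma~\ref{lemma2.3}(ii), yielding a contribution of order $\tau^2\cdot\tau^2\cdot\tau=\tau^5$ (the weight $(\lambda-t_n)^2\le\tau^2$ supplies the first factor); the $\mathcal{F}_{t_n}$-measurable part $\nabla[\sigma(u(t_n))-\sigma(u^n)]$ factors out of the isometry and, using \textbf{(A2)} in $H^1$ followed by Poincar\'e's inequality $\|u(t_n)-u^n\|_{H^1}\le C\|\nabla e_u^n\|_{L^2}$ (valid since $e_u^n\in H_0^1$), contributes $C\tau^3\,\E\|\nabla e_u^n\|_{L^2}^2$. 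For $J_3$, the increment difference is independent of $\mathcal{F}_{t_n}$, so $\E\|J_3\|_{L^2}^2=\E\|\nabla\sigma(u^n)\|_{L^2}^2\cdot\E|\widetilde{\Delta}W_n-\widehat{\Delta}W_n|^2\le C\tau^5$, using \textbf{(A1)} in $H^1$ with the stability bound $\E\|\nabla u^n\|_{L^2}^2\le C$ of Lemma~\ref{lemma3.1}(a) for the first factor and Lemma~\ref{increment}(i) for the second. Collecting $J_1,J_2,J_3$ and absorbing $\tau^5\le\tau^4$ delivers the claim; the $\|\nabla e_v^n\|_{L^2}^2$ term is retained only for uniformity with the later error estimates, as it does not in fact arise here.

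The most delicate step is the treatment of $J_2$: one must justify the stochastic Fubini interchange and then apply the It\^o isometry against the deterministic weight $(\lambda-t_n)$, which is precisely what produces the crucial extra factor $\tau^2$ and keeps the leading error at $\tau^4$ rather than $\tau^3$. The only genuinely new inputs relative to Lemma~\ref{lem20230703_7} are that every Lipschitz and growth estimate is now read in the $H^1$-norm and that $J_1$ demands $H^3$-regularity of $u$; both are available under the stated hypotheses, so the remaining computations are routine analogues of those carried out in Lemma~\ref{lem20230703_7}.
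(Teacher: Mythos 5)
Your proof is correct and follows exactly the route the paper intends: the paper in fact omits the proof of this lemma, stating only that it is similar to that of Lemma \ref{lem20230703_7}, and your argument is precisely that gradient analogue --- the same three-term decomposition of the adjusted identity \eqref{enhanced_holder}, with \textbf{(A1)}--\textbf{(A2)} now read in the $H^1$-norm, the stochastic Fubini/It\^o-isometry treatment of the double stochastic integral producing the $C\tau^3\E\bigl[\|\nabla e_u^n\|_{L^2}^2\bigr]$ and $C\tau^5$ contributions, Lemma \ref{increment}(i) with the stability bound of Lemma \ref{lemma3.1} for the increment-difference term, and the one extra order of spatial regularity (Lemma \ref{lemma2.3}(ii)--(iii) and an $H^3$-bound on $u$) that the gradient version demands. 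Your remark that the $\|\nabla e_v^n\|_{L^2}^2$ term on the right-hand side never actually arises is likewise consistent with the paper, where the corresponding $\|e_v^n\|_{L^2}^2$ term in Lemma \ref{lem20230703_7} plays no active role either.
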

Since the proof is similar to that of Lemma \ref{lem20230703_7}, we omit it to save space and leave it to the interested reader to explore. 

We are now ready to state the first main theorem of this paper. 

\begin{theorem}\label{thm20230704_1}
Let $(u,v)$ be the variational solution of \eqref{sac_s_mixed} and $(u^n,v^n)$ be the solution of the $\theta$-scheme with $\theta = 0$. Then, under the assumptions $({\bf A1})-({\bf A3})$, there holds the following error estimate:
\begin{align}
\max_{1 \leq n \leq N}\Bigl\{\mathbb{E}\left[\|u(t_n) - u^n\|_{L^2}^2\right]&+\mathbb{E}\left[\|\nabla (u(t_n) - u^n)\|_{L^2}^2\right]\\\nonumber
&+\mathbb{E}\left[\|v(t_n) - v^n\|_{L^2}^2\right] \Bigr\}	\le C\tau^2.
\end{align}
\end{theorem}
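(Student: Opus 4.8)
The plan is to derive an error evolution system for the pair $(e_u^n, e_v^n)$ by subtracting the scheme \eqref{eq20230703_30}--\eqref{eq20230703_31} (with $\theta=0$) from the exact identities satisfied by the variational solution, and then to run a discrete energy argument that mirrors the stability proof of Lemma~\ref{lemma3.1}(a). The natural energy quantity to track is
\begin{align*}
\mathcal{E}^n := \mE\bigl[\|e_v^n\|_{L^2}^2 + \|\nabla e_u^n\|_{L^2}^2\bigr],
\end{align*}
together with a control on $\mE[\|e_u^n\|_{L^2}^2]$. The first step is to write the exact solution's increment for $u$ using the enhanced expansion \eqref{enhanced_holder}: the consistency error of \eqref{eq20230703_30} is controlled precisely by Lemma~\ref{lem20230703_7} (in $L^2$) and Lemma~\ref{lem20230703_1} (in $H^1$), which supply the crucial $\mathcal{O}(\tau^4)$ local truncation bounds modulo the feedback terms $C\tau^3(\|e_u^n\|^2 + \|e_v^n\|^2)$. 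For the $v$-equation I would integrate the second equation of \eqref{sac_s_mixed} over $[t_n,t_{n+1}]$ and compare against \eqref{eq20230703_31} with $\theta=0$, splitting the drift, the stochastic integral $\int_{t_n}^{t_{n+1}}\sigma(u)\,dW$ versus $\sigma(u^n)\overline{\Delta}W_n$, and the Laplacian terms.

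**Key steps in order.** First I would test the error form of the $v$-equation with $e_v^{n+1}$ (or with $\delta_t e_u^{n+1}$, reflecting the substitution done in \eqref{eq3.7}) so that the term $\tau(\nabla e_u^{n+1}, \nabla e_v^{n+1})$ can be paired with the $u$-error identity to telescope into $\frac12[\|\nabla e_u^{n+1}\|^2 - \|\nabla e_u^n\|^2]$. Second, I would take expectations and exploit the martingale property $\mE[(\sigma(u^n)\overline{\Delta}W_n, X_n)] = 0$ for $\mathcal{F}_{t_n}$-measurable $X_n$ to kill the leading stochastic contributions, so that only the It\^o-isometry remainder survives; this is the standard mechanism converting the stochastic term into $\mE[\|\sigma(u(t_n)) - \sigma(u^n)\|^2]\,\tau \lesssim \tau\,\mE[\|e_u^n\|^2] + \tau^{?}$ after accounting for the temporal variation of $\sigma(u(s))$ across the step, for which Lemma~\ref{lemma2.3} provides the $\mathcal{O}(\tau)$ H\"older moments. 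Third, I would insert the truncation bounds from Lemmas~\ref{lem20230703_7} and \ref{lem20230703_1}, collect all residuals of the form $C\tau^3\mathcal{E}^n + C\tau^4$, apply the summation operator $\sum_{n=0}^{\ell}$, and close with the discrete Gronwall inequality to obtain $\max_n \mathcal{E}^n \le C\tau^2$. The $L^2$-bound $\mE[\|e_u^n\|^2]\le C\tau^2$ then follows either from the Poincar\'e inequality applied to $\nabla e_u^n$ or directly from summing \eqref{eq20230703_30}.

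**Main obstacle.** The hard part will be the careful treatment of the stochastic integral mismatch in the $v$-equation. Writing $\int_{t_n}^{t_{n+1}}\sigma(u(s))\,dW(s) - \sigma(u^n)\overline{\Delta}W_n$ and splitting via $\pm\sigma(u(t_n))$, the difference $\sigma(u(t_n)) - \sigma(u^n)$ contributes a term whose square expectation is only $\mathcal{O}(\tau)\cdot\mE[\|e_u^n\|^2]$ after the It\^o isometry, which is borderline for a global $\mathcal{O}(\tau^2)$ rate; one must ensure this term appears inside an inner product against $e_v^{n+1}-e_v^n$ (the increment, not $e_v^{n+1}$ itself) so that, after using the martingale property and the orthogonality $\mE[(\sigma(u^n)\overline{\Delta}W_n, e_v^n)]=0$, it generates a higher-order factor. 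This is exactly the maneuver used for term $II$ in the stability proof, and reproducing it correctly in the error setting—together with the extra drift-diffusion interplay introduced by the $\sigma(u^n)\widehat{\Delta}W_n$ correction in \eqref{eq20230703_30}—is where the ``novel numerical techniques'' promised in the abstract are needed. The remaining deterministic pieces (drift Lipschitz bounds via \textbf{(A2)}, Laplacian telescoping) are routine once this stochastic bookkeeping is organized.
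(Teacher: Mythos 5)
Your proposal is correct and follows essentially the same route as the paper: error equations tested with $e_u^{n+1}$ and $e_v^{n+1}$, the consistency of \eqref{eq20230703_30} controlled by Lemmas \ref{lem20230703_7} and \ref{lem20230703_1}, the telescoping of $(\nabla e_u^{n+1},\nabla e_v^{n+1})$ via the identity $v^{n+1}=\delta_t u^{n+1}+\tau^{-1}\sigma(u^n)\widehat{\Delta}W_n$, the martingale/It\^o-isometry maneuver pairing the noise mismatch against the increment $e_v^{n+1}-e_v^n$, and a discrete Gronwall closure. The only cosmetic discrepancy is your stated residual form $C\tau^3\mathcal{E}^n+C\tau^4$; the per-step bounds actually collected are $C\tau\,\mE[\|e_u\|_{L^2}^2+\|\nabla e_u\|_{L^2}^2+\|e_v\|_{L^2}^2]+C\tau^3$ (the $\tau^4$ truncation bounds enter through a $\tau^{-1}$-weighted Young inequality), which is exactly what yields the global $\mathcal{O}(\tau^2)$ rate you claim.
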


\begin{proof}
\eqref{sac_mixed_weakform} implies that there holds $\mathbb{P}$-almost surely  that
\begin{align}
& (u(t_{n+1}) - u(t_n), w) = \int_{t_n}^{t_{n+1}} (v(s), w) ds
\qquad\qquad\qquad\quad \forall w \in H_0^1(D),  \label{eq20230703_2} \\
& (v(t_{n+1}) - v(t_n), z) + \int_{t_n}^{t_{n+1}} (\nabla u(s),
\nabla z) ds   \label{eq20230703_3}\\ 
& \qquad = \int_{t_n}^{t_{n+1}}(F(u(s)), z) ds +
\int_{t_n}^{t_{n+1}} (\sigma(u(s)), z) dW(s) \quad \forall z\in H_0^1(D). \notag
\end{align}

Setting $\theta = 0$ in \eqref{eq20230703_30}--\eqref{eq20230703_31} and subtracting the resulted equations from \eqref{eq20230703_2}--\eqref{eq20230703_3} yield
the following error equations:  
\begin{align} 
(e_u^{n+1} - e_u^n, w) &= \int_{t_n}^{t_{n+1}} (v(s) -
v^{n+1}, w) ds  \label{eq20230703_4}\\
&\quad  +(\sigma({u^{n}})\,\widehat{\Delta}W_n,w)\qquad\forall w \in H_0^1(D),\notag \\
(e_v^{n+1} - e_v^{n}, z) &+ \int_{t_n}^{t_{n+1}}(\nabla u(s) - \nabla u^{n+1}, \nabla z) ds \label{eq20230703_5}\\
&= \int_{t_n}^{t_{n+1}}  ( F(u(s)) - F(u^{n+1}), z) ds\notag\\
&\quad+ \int_{t_n}^{t_{n+1}} (\sigma(u(s)) - \sigma(u^n), z) dW(s)\qquad \forall z \in H_0^1(D).\notag
\end{align}
Setting  $w = e_u^{n+1}$ in \eqref{eq20230703_4} and $z = e_v^{n+1}$ in \eqref{eq20230703_5}, and then taking the expectation, we obtain 
\begin{align} 
\E\left[(e_u^{n+1} - e_u^n, e_u^{n+1})\right] &=\E\left[ \int_{t_n}^{t_{n+1}} (v(s) -
v^{n+1}, e_u^{n+1}) ds\right]  \label{eq20230703_12}\\
&\qquad +\E\left[(\sigma({u^{n}})\,\widehat{\Delta}W_n,e_u^{n+1})\right],\notag \\
\E\left[ (e_v^{n+1} - e_v^{n}, e_v^{n+1}) \right] &+\E\left[  \int_{t_n}^{t_{n+1}}(\nabla u(s) - \nabla u^{n+1}, \nabla e_v^{n+1}) ds\right] \label{eq20230703_13}\\
&= \E\left[\int_{t_n}^{t_{n+1}}  ( F(u(s)) - F(u^{n+1}), e_v^{n+1}) ds\right]\notag\\
&\qquad + \E\left[\int_{t_n}^{t_{n+1}} (\sigma(u(s)) - \sigma(u^n), e_v^{n+1}) dW(s)\right]  \notag.
\end{align}

The left-hand side of \eqref{eq20230703_12} can be written as
\begin{align} \label{eq20230703_14}
\mathbb{E}\left[(e_u^{n+1} - e_u^n, e_u^{n+1})\right] & = \frac12\mathbb{E}\left[\|e_u^{n+1}\|_{L^2}^2\right] - \frac12\mathbb{E}\left[\|e_u^n\|_{L^2}^2\right] \\
&\qquad+ \frac12\mathbb{E}\left[\|e_u^{n+1}-e_u^n\|_{L^2}^2\right]\notag.
\end{align}
By Lemma \ref{lem20230703_7}, the right-hand side of \eqref{eq20230703_12} can be bounded from above as follows.
\begin{align}\label{eq20230703_15}
&\E\left[ \int_{t_n}^{t_{n+1}} (v(s) - v^{n+1}, e_u^{n+1}) ds\right] + \E\Bigl[(\sigma({u^{n}})\,\widehat{\Delta}W_n,e_u^{n+1})\Bigr]\\
&\, =\E\Bigl[(u(t_{n+1})-u(t_n) - \tau v(t_{n+1})+\sigma({u^{n}})\,\widehat{\Delta}W_n, e_u^{n+1})\Bigr]+\tau\E\left[(e_v^{n+1},e_u^{n+1})\right]\notag\\
&\,\le C\tau^2\E\left[\|e_u^{n}\|_{L^2}^2\right]+C\tau^3+\tau\mathbb{E}\left[\|e_u^{n+1}\|_{L^2}^2\right]+\tau\mathbb{E}\left[\|e_v^{n+1}\|_{L^2}^2\right]\notag.
\end{align}

The first term on the left-hand side of \eqref{eq20230703_13} can be rewritten as
\begin{align} \label{eq20230703_16}
\mathbb{E}\left[(e_v^{n+1} - e_v^n, e_v^{n+1})\right] 
&=\frac12\mathbb{E}\Bigl[  \|e_v^{n+1}\|_{L^2}^2 -  \|e_v^n\|_{L^2}^2 
+ \|e_v^{n+1}-e_v^n\|_{L^2}^2 \Bigr]. %\notag
\end{align}
The second term on the left-hand side of \eqref{eq20230703_13} can be rewritten as 
\begin{align} \label{eq20230703_17}
&\mathbb{E}\left[\int_{t_n}^{t_{n+1}}(\nabla u(s)
- \nabla u^{n+1}, \nabla e_v^{n+1}) ds\right]\\
&\, =\,\mathbb{E}\left[\int_{t_n}^{t_{n+1}}(\nabla u(s)
- \nabla u(t_{n+1}), \nabla e_v^{n+1}) ds\right] + \mathbb{E}\left[\int_{t_n}^{t_{n+1}}(\nabla e_u^{n+1}, \nabla e_v^{n+1}) ds\right].\notag
\end{align}

Using Lemma \ref{lemma2.3}$(\rm iii)_1$, the first term on the right-hand side of \eqref{eq20230703_17} can be bounded as follows:
\begin{align} \label{eq20230703_18}
\mathbb{E}\left[\int_{t_n}^{t_{n+1}}(\nabla u(s) - \nabla u(t_{n+1}), \nabla e_v^{n+1}) ds\right] 
&=\mathbb{E}\left[\int_{t_n}^{t_{n+1}}(\Delta u(s) - \Delta u(t_{n+1}), e_v^{n+1}) ds\right] \notag \\
&\le  C\tau^3 + \tau\mathbb{E}\left[\|e_v^{n+1}\|_{L^2}^2\right]\notag.
\end{align}
On noting that \eqref{eq20230703_30} implies 
\begin{align} \label{eq20230703_19}
v^{n+1} = \delta_tu^{n+1} + \frac1\tau\sigma({u^{n}})\,\widehat{\Delta}W_n. 
\end{align}
Then, by \eqref{eq20230703_19}, we have
\begin{align} \label{eq20230703_20}
\nabla e_v^{n+1} &=  \nabla v(t_{n+1}) - \nabla v^{n+1}\\
&=\left[\nabla v(t_{n+1}) -\nabla(\delta_tu(t_{n+1}))-\frac1\tau\nabla \sigma({u^{n}})\,\widehat{\Delta}W_n\right]\notag\\
&\qquad+\left[\nabla(\delta_tu(t_{n+1})) - \nabla(\delta_tu^{n+1})\right]\notag.
\end{align}
For the second term on the right-hand side of \eqref{eq20230703_17}, using Lemma \ref{lem20230703_1}, we get
\begin{align} \label{eq20230703_21}
&\mathbb{E}\left[\int_{t_n}^{t_{n+1}}(\nabla e_u^{n+1}, \nabla(v(t_{n+1})) - \nabla(\delta_tu(t_{n+1}))-\frac1\tau\nabla\sigma({u^{n}})\,\widehat{\Delta}W_n ds\right]\\\nonumber
&\qquad +\mathbb{E}\left[\int_{t_n}^{t_{n+1}}(\nabla e_u^{n+1}, \nabla \delta_t e_u^{n+1}) ds\right]\\\nonumber
&\quad  \le \tau\mathbb{E}\left[\|\nabla(v(t_{n+1})) - \nabla(\delta_tu(t_{n+1}))-\frac1\tau\nabla\sigma({u^{n}})\,\widehat{\Delta}W_n\|_{L^2}^2\right] \\ \nonumber
&\qquad +C\tau\mathbb{E}\left[\|\nabla e_u^{n+1}\|_{L^2}^2\right] 
+\frac12\mathbb{E}\left[\|\nabla e_u^{n+1}\|_{L^2}^2 \right]-\frac12\mathbb{E}\left[\|\nabla e_u^{n}\|_{L^2}^2 \right] \\
&\qquad  + \frac12\mathbb{E}\left[\|\nabla e_u^{n+1}-\nabla e_u^n\|_{L^2}^2 \right]\notag\\\nonumber
&\quad \le C\tau^3+C\tau^2\mathbb{E}\left[\|\nabla e_u^{n+1}\|_{L^2}^2\right] +\frac12\mathbb{E}\left[\|\nabla e_u^{n+1}\|_{L^2}^2 \right]-\frac12\mathbb{E}\left[\|\nabla e_u^{n}\|_{L^2}^2 \right] \\\nonumber
&\qquad + \frac12\mathbb{E}\left[\|\nabla e_u^{n+1}-\nabla e_u^n\|_{L^2}^2 \right]\notag.
\end{align}

Next, using the assumption $({\bf A2})_{1}$ and Lemma \ref{lemma2.3}(i), the first term on the right-hand side of \eqref{eq20230703_13} can be bounded as follows:
\begin{align}\label{eq20230703_22} 
&	\mathbb{E}\left[\int_{t_n}^{t_{n+1}}  ( F(u(s)) - F^{n+1}, e_v^{n+1}) ds\right] \\
&\quad = \, \mathbb{E}\left[\int_{t_n}^{t_{n+1}}  ( F(u(s)) - F(u(t_{n+1})), e_v^{n+1}) ds\right] \notag \\
&\qquad \quad + \mathbb{E}\left[\int_{t_n}^{t_{n+1}}  ( F(u(t_{n+1}) - F(u^{n+1}), e_v^{n+1}) ds\right]\notag\\
&\quad \le C\tau^3 + C\tau\mathbb{E}\left[\|e_v^{n+1}\|_{L^2}^2\right] + C\tau\mathbb{E}\left[\|e_u^{n+1}\|_{L^2}^2\right].\notag
\end{align}

Finally, using  It\^{o} isometry, the second term on the right-hand side of \eqref{eq20230703_13} can be bounded as 
\begin{align}\label{eq20230703_23}
&\mathbb{E}\left[\int_{t_n}^{t_{n+1}} (\sigma(u(s)) - \sigma(u^n), e_v^{n+1}) dW(s)\right]\\
&\quad = \mathbb{E}\left[\int_{t_n}^{t_{n+1}} (\sigma(u(s)) - \sigma(u(t_n)), e_v^{n+1}-e_v^n) dW(s)\right]\notag\\
&\qquad \quad +\mathbb{E}\left[\int_{t_n}^{t_{n+1}} (\sigma(u(t_n))-\sigma(u^n), e_v^{n+1}-e_v^n) dW(s)\right]\notag\\
&\qquad \le \frac14\mathbb{E}\left[\|e_v^{n+1}-e_v^n\|_{L^2}^2\right] + C\tau\mathbb{E}\left[\|e_u^{n}\|_{L^2}^2 \right] + C\tau^3\notag.
\end{align}

Combining estimates obtained  \eqref{eq20230703_12}-\eqref{eq20230703_23} yields 
\begin{align}\label{eq20230704_1}
& \frac12\mathbb{E}\left[\|e_u^{n+1}\|_{L^2}^2\right] - \frac12\mathbb{E}\left[\|e_u^n\|_{L^2}^2\right] + \frac12\mathbb{E}\left[\|e_u^{n+1}-e_u^n\|_{L^2}^2\right] \\
&\qquad +\frac12\mathbb{E}\left[\|e_v^{n+1}\|_{L^2}^2\right] - \frac12\mathbb{E}\left[\|e_v^n\|_{L^2}^2\right] + \frac12\mathbb{E}\left[\|e_v^{n+1}-e_v^n\|_{L^2}^2\right]\notag\\
&\qquad +\frac12\mathbb{E}\left[\|\nabla e_u^{n+1}\|_{L^2}^2\right]-\frac12\mathbb{E}\left[\|\nabla e_u^{n}\|_{L^2}^2 ds\right] + \frac12\mathbb{E}\left[\|\nabla e_u^{n+1}-\nabla e_u^n\|_{L^2}^2\right]\notag\\
&\quad \le  C\tau\mathbb{E}\left[ \|e_u^{n+1}\|_{L^2}^2 + \|e_u^{n}\|_{L^2}^2 \right] \notag \\ 
&\qquad +C\tau\mathbb{E}\left[ \|e_v^{n+1}\|_{L^2}^2 \right] + C\tau\mathbb{E}\left[\|\nabla e_u^{n+1}\|_{L^2}^2\right] \notag \\
&\qquad +\frac14\mathbb{E}\left[ \|e_v^{n+1}-e_v^n\|_{L^2}^2\right] + \frac14 \mathbb{E}\left[\|\nabla e_u^{n+1}-\nabla e_u^n\|_{L^2}^2\right] + C\tau^3 \notag.
\end{align}
Taking the summation over $n$ from $0$ to $\ell-1$ and applying the discrete Gronwall's inequality, we obtain
\begin{align}\label{eq20230704_2}
& \mathbb{E}\left[\|e_u^{\ell}\|_{L^2}^2\right]+\mathbb{E}\left[\|\nabla e_u^{\ell}\|_{L^2}^2 \right]+\mathbb{E}\left[\|e_v^{\ell}\|_{L^2}^2\right] 
+ \sum\limits_{n=0}^{\ell-1}\mathbb{E}\left[\|e_v^{n+1}-e_v^n\|_{L^2}^2\right]\\
&\qquad\qquad\quad    + \sum\limits_{n=0}^{\ell-1}\mathbb{E}\left[\|e_u^{n+1}-e_u^n\|_{L^2}^2\right] 
+ \sum\limits_{n=0}^{\ell-1}\mathbb{E}\left[\|\nabla e_u^{n+1}-\nabla e_u^n\|_{L^2}^2 \right] 
\le C\tau^2. \nonumber
\end{align}
The proof is complete. 		
\end{proof}

\begin{remark}
Although the $\mathcal{O}(\tau)$ rate of convergence in the energy norm for the displacement approximation $u^n$ was obtained previously for other schemes \cite{cohen2013trigonometric, feng2022higher},   the above $\mathcal{O}(\tau)$ convergence rate in the $L^\infty(L^2)$-norm for the velocity approximation $v^n$ is the first of its kind. 
\end{remark}

%%%%%
\subsection{Error estimates for the scheme with $\theta = \frac12$}

The goal of this subsection is to establish an optimal $\mathcal{O}(\tau^{\frac32})$ order of error estimates for the $\theta$-scheme with $\theta = \frac12$, which is the second main theorem of this paper. As expected, the proof of this theorem is much more involved and technical than that of the first theorem for the case $\theta-0$. 

\begin{theorem}\label{theorem32} 
Let $(u,v)$ be the variational solution of \eqref{sac_s_mixed} and  $(u^n,v^n)$ be the solution of the $\theta$-scheme with $\theta = \frac12$. Then, under the assumptions $({\bf A1})-({\bf A3})$ and the initial approximations $\{u^0, v^0, u^1\}$ satisfy \eqref{eq3.3} and \eqref{eq3.4}. Then, there holds the following estimate:
\begin{align}\label{optimal_rate}
\max_{1 \leq n \leq N}\E\bigl[\|u(t_n)-u^n\|^2_{L^2}\bigr]\le C\tau^{3}.
\end{align}
\end{theorem}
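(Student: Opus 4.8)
The plan is to estimate the displacement error $e_u^n=u(t_n)-u^n$ directly. The observation that shapes the whole argument is that, unlike the case $\theta=0$ in Theorem \ref{thm20230704_1}, here the target $L^2$ rate $\tau^{3/2}$ is \emph{faster} than the energy-norm (gradient) rate produced by the Crank--Nicolson energy argument of Lemma \ref{lemma3.1}(b); hence the bound cannot be recovered from a gradient estimate via Poincar\'e, and a dedicated treatment of the displacement equation is required. I would begin by deriving the error equations: set $\theta=\frac12$ in \eqref{eq20230703_30}--\eqref{eq20230703_31} and subtract them from the integrated weak form \eqref{eq20230703_2}--\eqref{eq20230703_3}, keeping the centered Laplacian average $\tfrac\tau2\nabla(e_u^{n+1}+e_u^{n-1})$, the averaged drift, and the It\^o--Taylor correction $D_u\sigma(u^n)v^n\widehat{\Delta}W_n$ matched against their exact counterparts.

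From the $u$-equation I would extract the identity $\tau e_v^{n+1}=(e_u^{n+1}-e_u^n)+R_u^n$, where $R_u^n:=\tau v(t_{n+1})-[u(t_{n+1})-u(t_n)]-\sigma(u^n)\widehat{\Delta}W_n$ is precisely the quantity controlled by the enhanced H\"older estimate of Lemma \ref{lem20230703_7}, i.e. $\E[\|R_u^n\|_{L^2}^2]\le C\tau^4+C\tau^3\,\E[\|e_u^n\|_{L^2}^2+\|e_v^n\|_{L^2}^2]$. The crucial structural point is that $R_u^n$ splits into a deterministic double-integral part of size $\mathcal O(\tau^2)$, a martingale part (the stochastic double integral together with $\sigma(u^n)(\widetilde{\Delta}W_n-\widehat{\Delta}W_n)$) that is $\mathcal F_{t_n}$-conditionally mean zero, and a Lipschitz feedback part proportional to $\|e_u^n\|_{L^2}$. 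I would then build the discrete energy by testing the $v$-error equation with $(e_u^{n+1}-e_u^{n-1})/\tau$, exactly as in the stability proof, so that the left-hand side telescopes into $\|e_v^{n+1}\|_{L^2}^2$ and $\tfrac12\|\nabla(\cdots)\|_{L^2}^2$, and convert the resulting cross terms into the displacement error using the identity above.

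The deterministic consistency of the centered averages is second order: the refined quadrature rule \cite{Dragomir}, applied with the $C^{1,1/2}$-H\"older regularity in time of $\Delta u$ and $F(u)$ supplied by Lemma \ref{lemma2.3}(iii)--(iv), bounds the Laplacian- and drift-averaging errors by $\mathcal O(\tau^2)$ per step, hence $\mathcal O(\tau^3)$ after summation. The stochastic consistency is where the scheme's design pays off: It\^o isometry, the increment bounds of Lemma \ref{increment} (namely $\E|\widehat{\Delta}W_n|^2\le C\tau^3$ and $\E|\widetilde{\Delta}W_n-\widehat{\Delta}W_n|^2\le C\tau^5$), and the derivative bounds $\textbf{(A3)}$ together show that the correction $D_u\sigma(u^n)v^n\widehat{\Delta}W_n$ cancels the leading $\mathcal O(\tau^{3/2})$ diffusion error, while the remaining stochastic truncation terms are either $\mathcal F_{t_n}$-conditionally mean zero (so their products with $\mathcal F_{t_n}$-measurable errors vanish in expectation) or square-summable, giving $\sum_n\E\|R_u^n\|_{L^2}^2\le N\cdot C\tau^4=C\tau^3$. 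The initial layer is absorbed using \eqref{eq3.3}--\eqref{eq3.4}. Collecting all contributions and feeding the $\tau$-weighted feedback terms $\tau\,\E[\|e_u^n\|_{L^2}^2+\|e_v^n\|_{L^2}^2]$ into a discrete Gronwall inequality would yield $\max_{1\le n\le N}\E[\|e_u^n\|_{L^2}^2]\le C\tau^3$.

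The hard part will be extracting the sharp half-order gain. The diffusion-generated consistency error is only $\mathcal O(\tau^{3/2})$ locally, and a naive summation of $R_u^n$ accumulates to the suboptimal $\mathcal O(\tau^{1/2})$, since $(\sum_n\|R_u^n\|_{L^2})^2$ is far larger than $\sum_n\|R_u^n\|_{L^2}^2$. Overcoming this requires \emph{simultaneously} exploiting the martingale (conditional mean-zero) structure so that cross terms against $\mathcal F_{t_n}$-adapted quantities drop out in expectation and only square-summable contributions survive, using the symmetric second-order Crank--Nicolson consistency for the deterministic part, and tracking the exact cancellation produced by $D_u\sigma(u^n)v^n\widehat{\Delta}W_n$ — all while the nonlinear drift and diffusion are coupled through the averaged terms, which makes the bookkeeping delicate. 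This is exactly the interplay the paper flags as requiring novel techniques.
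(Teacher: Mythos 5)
There is a genuine gap, and it sits exactly at the step you flag as ``the hard part.'' Your plan is to test the $v$-error equation with $(e_u^{n+1}-e_u^{n-1})/\tau$, as in the stability proof, so that the left-hand side telescopes into $\mE[\|e_v^{n+1}\|_{L^2}^2]$ plus a gradient term, and then run Gronwall on the combined quantity $\tau\,\mE[\|e_u^n\|^2_{L^2}+\|e_v^n\|^2_{L^2}]$. This is precisely the ``traditional'' energy argument that the paper carries out in Remark (a) after Theorem \ref{theorem32}, and it provably tops out at \eqref{eq3.47}, i.e.\ $\mE[\|e_u^n\|_{L^2}^2]\le C\tau^2$, not $C\tau^3$. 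The obstruction is structural, not bookkeeping: the stochastic consistency error in the $v$-equation, $\int_{t_n}^{t_{n+1}}(\sigma(u(s))-\sigma(u^n))\,dW(s)$, has local mean-square size $\mathcal{O}(\tau^3)$ even after all martingale cancellations and the $D_u\sigma(u^n)v^n\widehat{\Delta}W_n$ correction are exploited, so after summing $N$ steps the velocity error is irreducibly $\mathcal{O}(\tau^2)$ in squared expectation (the rate $\tau$ is already optimal for $v$, which is only $\frac12$-H\"older in time). Once $\|e_v^n\|^2_{L^2}$ appears as an unknown inside your Gronwall loop --- and it must, since your identity $\tau e_v^{n+1}=(e_u^{n+1}-e_u^n)+R_u^n$ and the bound of Lemma \ref{lem20230703_7} both feed $e_v^n$ back in --- the combined energy can be no smaller than the velocity contribution, and the displacement estimate inherits the $\tau^2$ cap. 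No amount of conditional-mean-zero accounting within this framework recovers the extra half order.

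The device the paper uses, and which is absent from your proposal, is to \emph{eliminate $v$ entirely and integrate the scheme once more in discrete time}: substitute \eqref{eq20230703_30} into \eqref{eq20230703_31} to get a two-step recursion in $u$ alone, then apply the summation operator $\sum_{m=1}^{n}$ to that recursion, matching it against the \emph{doubly} time-integrated continuous equation \eqref{eq3.25}. Testing the resulting error equation with the average $e_u^{n+\frac12}$ makes the left-hand side telescope into $\frac12\|e_u^{n+1}\|^2_{L^2}$ plus $\frac{\tau^2}{4}\|\nabla\overline{e}_u^{n+1}\|^2_{L^2}$, where $\overline{e}_u^{n+1}=\sum_{m=1}^n e_u^{m+1}$ is the \emph{accumulated} error (see \eqref{lhs_second}); the closing inequality \eqref{last_form} then involves only $\mE[\|e_u\|^2_{L^2}]$ and never $\mE[\|e_v\|^2_{L^2}]$, which is what lets Gronwall close at $\tau^3$. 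The extra discrete time integration is also what turns the raw $\mathcal{O}(\tau^{3/2})$ local noise error into terms that pair with $e_u^{n+\frac12}$ at order $\tau^4$ per step (after the It\^o--Taylor cancellation against $D_u\sigma(u^m)v^m\widehat{\Delta}W_m$, the Mean Value Theorem expansion, and Lemma \ref{increment}), and what makes the trapezoidal quadrature lemma applicable to the summed Laplacian and drift averages. Your diagnosis of the accumulation problem ($\sum_n\|R_u^n\|$ versus $(\sum_n\|R_u^n\|^2)^{1/2}$) is accurate, but the resolution is not finer martingale bookkeeping inside the energy method --- it is the change of unknown to the summed displacement equation, which your outline does not contain and which the rest of your argument cannot substitute for.
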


\begin{proof} 
The proof is quite long, we divide it into {\em eight} steps for the sake of clarity. 

\medskip 
{\em Step 1:}
First, substituting \eqref{eq20230703_30} into \eqref{eq20230703_31}, we obtain
\begin{align}\label{eq3.23}
&\bigl((u^{m+1} - u^m) - (u^m - u^{m-1}), \phi\bigr) + \bigl(\sigma(u^m) \widehat{\Delta} W_m - \sigma(u^{m-1})\widehat{\Delta} W_{m-1}, \phi\bigr)\\\nonumber
&\qquad \qquad +\tau^2 \bigl(\nabla u^{m,\frac12},\nabla \phi\bigr) \\\nonumber
&\quad = \tau^2 \bigl(F^{m,\frac12},\phi\bigr) + \tau \bigl(\sigma(u^m)\overline{\Delta} W_m, \phi\bigr) + \tau\bigl(D_u\sigma(u^m)v^m\widehat{\Delta}W_m, \phi\bigr).
\end{align}
Applying the summation operator $\sum_{m=1}^n$ ($1\leq n <N$) to \eqref{eq3.23} and denoting $\overline{u}^{n+1} := \sum_{m=1}^n u^{m+1}$, we get 
\begin{align}\label{eq3.24}
&	\bigl(u^{n+1} - u^n, \phi\bigr) + \bigl(\sigma(u^n)\widehat{\Delta} W_n, \phi\bigr) + \tau^2\bigl(\nabla\overline{u}^{n,\frac12},\nabla\phi\bigr)\\\nonumber
&\qquad  = \bigl(u^1 - u^0,\phi\bigr) + \bigl(\sigma(u^0)\widehat{\Delta} W_0 + \tau^2\Delta u^{\frac12},\phi\bigr) + \tau^2\sum_{m=1}^n \bigl(F^{m,\frac12}, \phi\bigr)\\\nonumber 
&\qquad \qquad + \tau\sum_{m=1}^n \bigl(\sigma(u^m),\phi\bigr)\overline{\Delta} W_m +\tau\sum_{m=1}^n\bigl(D_u\sigma(u^m)v^m\widehat{\Delta}W_m, \phi\bigr),
\end{align}
where $u^{\frac12} = u^{0+\frac12} = \frac12\bigl(u^1 + u^0\bigr)$.

Next, integrating \eqref{sac_s_mixed} in time and using the fact that $v = u_t$ we obtain for any $0 \leq \lambda \leq \mu  \leq T$ and $\phi \in H^1_0(D)$
\begin{align}\label{eq3.25}
&	\bigl(u(\mu) - u(\lambda),\phi\bigr) + \int_{\lambda}^{\mu} \int_0^s \bigl(\nabla u(\xi), \nabla \phi\bigr)\, d\xi\,ds\\\nonumber
&\,\,= (\mu - \lambda) \bigl(v_0,\phi\bigr) + \int_{\lambda}^{\mu} \int_0^s \bigl(F(u(\xi)), \phi\bigr)\,d\xi\,ds + \int_{\lambda}^{\mu} \int_0^s \bigl(\sigma(u(\xi),\phi\bigr)\, dW(\xi)\, ds. 
\end{align}
Setting $\mu = t_{n+1}$ and $\lambda = t_n$ in \eqref{eq3.25}, and subtracting \eqref{eq3.25} from \eqref{eq3.24} yield the following error equation:
\begin{align*}
&\bigl(e_u^{n+1} - e^n_{u},\phi\bigr) + \int_{t_n}^{t_{n+1}} \int_0^s \bigl(\nabla u(\xi),\nabla\phi\bigr)\, d\xi\, ds - \tau^2 \bigl(\nabla\overline{u}^{n,\frac12}, \nabla \phi\bigr)\\\nonumber
&\quad = \bigl(\tau v_0 - (u^1 - u^0) - \sigma(u^0)\widehat{\Delta} W_0 - \tau^2\Delta u^{\frac12},\phi\bigr) \\\nonumber
&\qquad+ \int_{t_n}^{t_{n+1}} \int_0^s \bigl(F(u(\xi)),\phi\bigr)\, d\xi\, ds - \tau^2 \sum_{m=1}^n \bigl(F^{m,\frac12} , \phi\bigr)\\\nonumber
&\qquad+ \int_{t_n}^{t_{n+1}} \int_0^s \bigl(\sigma(u(\xi)),\phi\bigr)\, dW(\xi)\, ds - \tau\sum_{m=1}^n \bigl(\sigma(u^m),\phi\bigr)\overline{\Delta} W_m \\\nonumber
&\qquad\qquad\qquad-\tau\sum_{m=1}^n\bigl(D_u\sigma(u^m)v^m\widehat{\Delta}W_m, \phi\bigr)+ \bigl(\sigma(u^n)\widehat{\Delta} W_n, \phi\bigr),
\end{align*}
where $e_u^n := u(t_n)-u^n$ and $e_v^n := v(t_n) - v^n$.  Thus, 
\begin{align}\label{error_equation}
&\bigl(e_u^{n+1} - e^n_{u},\phi\bigr) + \tau^2 \bigl(\nabla \overline{e}_u^{n,\frac12},\nabla\phi\bigr)\\\nonumber
&\quad = \bigl(\tau v_0 - (u^1 - u^0) - \sigma(u^0)\widehat{\Delta} W_0 + \frac{\tau^2}{2}\Delta e_u^1,\phi\bigr) \\\nonumber 
&\qquad \quad - \Bigl(\int_{t_n}^{t_{n+1}}\int_0^s \nabla u(\xi)\, d\xi\, ds - \tau^2\sum_{m=1}^n \frac{\nabla\bigl[u(t_{m+1}) + u(t_{m-1})\bigr]}{2},\nabla\phi\Bigr) \\\nonumber
&\qquad\quad +\Biggl\{ \int_{t_n}^{t_{n+1}} \int_0^s \bigl(F(u(\xi)), \phi\bigr)\, d\xi\, ds - \tau^2 \sum_{m=1}^n \bigl(F^{m,\frac12} , \phi\bigr)   \Biggr\}\\\nonumber
&\qquad\quad + \Biggl\{\int_{t_n}^{t_{n+1}} \int_0^s \bigl(\sigma(u(\xi)),\phi\bigr)\, dW(\xi)\, ds - \tau\sum_{m=1}^n \bigl(\sigma(u^m),\phi\bigr)\overline{\Delta} W_m \\\nonumber
&\qquad\quad -\tau\sum_{m=1}^n\bigl(D_u\sigma(u^m)v^m\widehat{\Delta}W_m, \phi\bigr)+ \bigl(\sigma(u^n)\widehat{\Delta} W_n, \phi\bigr)  \Biggr\}.
\end{align}
Taking $\phi = e_u^{n+\frac12} := \frac{1}{2}(e_u^{n+1} + e_u^n)$ in \eqref{error_equation},  we obtain 
\begin{align}\label{error_27}
&\bigl(e_u^{n+1} - e^n_{u},e_u^{n+\frac12}\bigr) + \tau^2 \bigl(\nabla \overline{e}_u^{n,\frac12},\nabla e_u^{n+\frac12}\bigr)\\\nonumber
&\quad = \bigl(\tau v_0 - (u^1 - u^0) - \sigma(u^0)\widehat{\Delta} W_0 ,e_u^{n+\frac12}\bigr) + \frac{\tau^2}{2}\bigl(\Delta e_u^1, e_u^{n+\frac12}\bigr) \\\nonumber 
&\qquad\quad  - \Bigl(\int_{t_n}^{t_{n+1}}\int_0^s \nabla u(\xi)\, d\xi\, ds - \tau^2\sum_{m=1}^n \frac{\nabla\bigl[u(t_{m+1}) + u(t_{m-1})\bigr]}{2},\nabla e_u^{n+\frac12}\Bigr) \\\nonumber
&\qquad\quad +\Biggl\{ \int_{t_n}^{t_{n+1}} \int_0^s \bigl(F(u(\xi)), e_u^{n+\frac12}\bigr)\, d\xi\, ds - \tau^2 \sum_{m=1}^n \bigl(F^{m,\frac12} , e_u^{n+\frac12}\bigr)   \Biggr\}\\\nonumber
&\qquad\quad + \Biggl\{\int_{t_n}^{t_{n+1}} \int_0^s \bigl(\sigma(u(\xi)), e_u^{n+\frac12}\bigr)\, dW(\xi)\, ds - \tau\sum_{m=1}^n \bigl(\sigma(u^m),  e_u^{n+\frac12}\bigr)\overline{\Delta} W_m \\\nonumber
&\qquad\quad -\tau\sum_{m=1}^n\bigl(D_u\sigma(u^m)v^m\widehat{\Delta}W_m, e_u^{n+\frac12}\bigr)+ \bigl(\sigma(u^n)\widehat{\Delta} W_n, e_u^{n+\frac12}\bigr)  \Biggr\}\\\nonumber
&\quad =: {\tt I + II + III + IV}.
\end{align}

\medskip
{\em Step 2:} 
The two terms on the left-hand side of \eqref{error_27} can be rewritten as follows: 
\begin{align}\label{lhs_first}
\bigl(e_u^{n+1} - e^n_{u},e_u^{n+\frac12}\bigr) &= \frac12\bigl[\|e_u^{n+1}\|^2_{L^2} - \|e_u^{n}\|^2_{L^2}\bigr], \\  \label{lhs_second}
\tau^2 \bigl(\nabla \overline{e}_u^{n,\frac12},\nabla e_u^{n+\frac12}\bigr) &= \frac{\tau^2}{4} \bigl(\nabla\overline{e}_u^{n+1} + \nabla\overline{e}_u^{n-1}, \nabla \overline{e}_u^{n+1} - \nabla\overline{e}_u^{n-1}\bigr)\\\nonumber
&= \frac{\tau^2}{4}\bigl[\|\nabla\overline{e}_u^{n+1}\|^2_{L^2}  - \|\nabla\overline{e}_u^{n-1}\|^2_{L^2}\bigr].
\end{align}

We now bound each term on the right-hand side of \eqref{error_equation} from above. To control the noise term ${\tt IV}$, we need to use the following fact:  
\begin{align*}
\int_0^s \sigma(u(\xi))\, dW(\xi) = \biggl(\sum_{m=0}^{n-1} \int_{t_m}^{t_{m+1}} + \int_{t_n}^s\biggr) \sigma(u(\xi))\, dW(\xi), \qquad t_n \leq s \leq t_{n+1}. 
\end{align*}
Therefore, the first two terms of {\tt IV} can be written as
%	\begin{align}
%	&\int_{t_n}^{t_{n+1}} \int_0^s \bigl(\sigma(u(\xi)), e_u^{n+\frac12}\bigr)\, dW(\xi)\, ds - \tau\sum_{m=1}^n \bigl(\sigma(u^m), e_u^{n+\frac12}\bigr)\overline{\Delta} W_m \\\nonumber
%		&\quad =\Bigl(\int_{t_n}^{t_{n+1}}\int_0^s \sigma(u(\xi))\, dW(\xi)\, ds - \tau \sum_{m=1}^n \sigma(u^m)\overline{\Delta} W_m, e_u^{n+\frac12}\Bigr) \\\nonumber
%	&\quad = \Bigl(\int_{t_n}^{t_{n+1}} \sum_{m=0}^{n-1} \int_{t_m}^{t_{m+1}} \sigma(u(\xi))\, dW(\xi)\, ds  \\\nonumber
%		&\qquad\quad -  \int_{t_n}^{t_{n+1}} \sum_{m=1}^n \int_{t_m}^{t_{m+1}} \sigma(u^m)\, dW(\xi)\, ds, e_u^{n+\frac12} \Bigr) \\\nonumber 
%		&\qquad\quad +\Bigl(\int_{t_n}^{t_{n+1}} \int_{t_n}^s \sigma(u(\xi))\, dW(\xi)\, ds ,e_u^{n+\frac12}\Bigr)\\\nonumber
%	&\quad =  \Bigl(\int_{t_n}^{t_{n+1}} \sum_{m=1}^{n-1} \int_{t_m}^{t_{m+1}} \bigl[\sigma(u(\xi)) - \sigma(u^m)\bigr]\, dW(\xi)\, ds, e_u^{n+\frac12}\Bigr)  \\\nonumber
%		&\qquad\quad +\tau \Bigl(\int_{t_0}^{t_1}\sigma(u(\xi))\, dW(\xi), e_u^{n+\frac12}\Bigr) -  \tau\Bigl( \sigma(u^n)\overline{\Delta} W_n, e_u^{n+\frac12} \Bigr) \\\nonumber 
%	&\qquad\quad +\Bigl(\int_{t_n}^{t_{n+1}} \int_{t_n}^s \sigma(u(\xi))\, dW(\xi)\, ds ,e_u^{n+\frac12}\Bigr),
%	\end{align}
%	which (together with adding/subtracting the terms $\pm \sigma(u(t_m))$ implies that
\begin{align}\label{eq3.29}
&\Bigl(\int_{t_n}^{t_{n+1}}\int_0^s \sigma(u(\xi))\, dW(\xi)\, ds - \tau \sum_{m=1}^n \sigma(u^m)\overline{\Delta} W_m, e_u^{n+\frac12}\Bigr) \\\nonumber
&\quad =  \Bigl(\int_{t_n}^{t_{n+1}} \sum_{m=1}^{n-1} \int_{t_m}^{t_{m+1}} \bigl[\sigma(u(\xi)) - \sigma(u(t_m))\bigr]\, dW(\xi)\, ds, e_u^{n+\frac12}\Bigr)  \\\nonumber
&\qquad\quad +\Bigl(\int_{t_n}^{t_{n+1}} \sum_{m=1}^{n-1} \int_{t_m}^{t_{m+1}} \bigl[\sigma(u(t_m)) - \sigma(u^m)\bigr]\, dW(\xi)\, ds, e_u^{n+\frac12}\Bigr)  \\\nonumber
&\qquad\quad +\Bigl(\int_{t_n}^{t_{n+1}} \int_{t_n}^s \bigl[\sigma(u(\xi)) - \sigma(u(t_n))\bigr]\, dW(\xi)\, ds ,e_u^{n+\frac12}\Bigr)\\\nonumber
&\qquad\quad +\Bigl(\sigma(u(t_n))\int_{t_n}^{t_{n+1}} \bigl(W(s) - W(t_n)\bigr)\,ds, e_u^{n+\frac12}\Bigr) \\\nonumber
&\qquad\quad  +\tau \Bigl(\int_{t_0}^{t_1}\sigma(u(\xi))\, dW(\xi), e_u^{n+\frac12}\Bigr) -  \tau\Bigl( \sigma(u^n)\overline{\Delta} W_n, e_u^{n+\frac12} \Bigr).
\end{align}
Substituting \eqref{eq3.29} for the first two terms of {\tt IV},  we obtain
\begin{align}\label{eq3.30}
{\tt IV} &= \Bigl(\int_{t_n}^{t_{n+1}} \sum_{m=1}^{n-1} \int_{t_m}^{t_{m+1}} \bigl[\sigma(u(\xi)) - \sigma(u(t_m))\bigr]\, dW(\xi)\, ds, e_u^{n+\frac12}\Bigr)  \\\nonumber
&\qquad+\Bigl(\int_{t_n}^{t_{n+1}} \sum_{m=1}^{n-1} \int_{t_m}^{t_{m+1}} \bigl[\sigma(u(t_m)) - \sigma(u^m)\bigr]\, dW(\xi)\, ds, e_u^{n+\frac12}\Bigr)  \\\nonumber
&\qquad+\Bigl(\int_{t_n}^{t_{n+1}} \int_{t_n}^s \bigl[\sigma(u(\xi)) - \sigma(u(t_n))\bigr]\, dW(\xi)\, ds ,e_u^{n+\frac12}\Bigr)\\\nonumber
&\qquad+\Bigl(\sigma(u(t_n))\int_{t_n}^{t_{n+1}} \bigl(W(s) - W(t_n)\bigr)\,ds, e_u^{n+\frac12}\Bigr)\\\nonumber 
&\qquad - \tau \Bigl(\sum_{m=1}^n D_u \sigma(u^m) v^m \widehat{\Delta} W_m, e_u^{n+\frac12}\Bigr) + \Bigl(\sigma(u^n) \widehat{\Delta} W_n, e_u^{n+\frac12}\Bigr)  \\\nonumber
&\qquad+\tau \Bigl(\int_{t_0}^{t_1}\sigma(u(\xi))\, dW(\xi), e_u^{n+\frac12}\Bigr) -  \tau\Bigl( \sigma(u^n)\overline{\Delta} W_n, e_u^{n+\frac12} \Bigr)\\\nonumber
&=: {\tt IV_1 + IV_2 + \cdots + IV_8}.
\end{align}

\medskip
{\em Step 3:} 
Adding and subtracting $D_u \sigma(u(t_n)) v(t_n) (\xi -t_n)$  to the term ${\tt IV_1}$ and combining with the term ${\tt IV_5}$ yield
\begin{align*}
&{\tt IV_1 + IV_5}\\\nonumber
&= \tau\Bigl(\sum_{m=1}^{n-1} \int_{t_m}^{t_{m+1}} \bigl[\sigma(u(\xi)) - \sigma(u(t_m)) - D_u\sigma(u(t_m))v(t_m) (\xi - t_m) \bigr]\, dW(\xi), e_u^{n+\frac12}\Bigr)  \\\nonumber
&\qquad+  \tau\Bigl(\sum_{m=1}^{n-1} \int_{t_m}^{t_{m+1}} \bigl(D_u\sigma(u(t_m)) v(t_m) - D_u \sigma(u^m) v^m\bigr)(\xi - t_m)\, dW(\xi)  ,e_u^{n+\frac12}\Bigr) \\\nonumber 
&\qquad+ \tau \Bigl(\sum_{m=1}^{n-1} D_u\sigma(u^m) v^m \bigl(\widetilde{\Delta} W_m - \widehat{\Delta} W_{m}\bigr),  e_u^{n+\frac12}\Bigr)  - \tau \bigl(D_u\sigma(u^n)v^n \widehat{\Delta} W_n, e_u^{n+\frac12}\bigr)\\\nonumber
&=: {\tt IV_{1,5}^1 + IV_{1,5}^2 + IV_{1,5}^3 + IV_{1,5}^4}.
\end{align*}
By the Mean Value Theorem, we obtain
\begin{align*}
\sigma(u(\xi)) - \sigma(u(t_m)) &- D_u\sigma(u(t_m))v(t_m) (\xi - t_m) \\\nonumber
&= \Bigl(D_u\sigma(u(\zeta)) v(\zeta) - D_u\sigma(u(t_m))v(t_m) \Bigr) (\xi - t_m),
\end{align*}
where $\zeta \in (t_m, \xi)$.  It follows from the It\^o isometry and the above identity that 
\begin{align*}
&\mE\bigl[{\tt IV_{1,5}^1}\bigr] \\\nonumber
&\,\,= \tau\mE\Bigl[ \Bigl(\sum_{m=1}^{n-1} \int_{t_m}^{t_{m+1}} \Bigl(D_u\sigma(u(\zeta)) v(\zeta) - D_u\sigma(u(t_m))v(t_m) \Bigr) (\xi - t_m)\, dW(\xi),  e_u^{n+\frac12}\Bigr) \Bigr] \\\nonumber
&\,\,\leq C\tau \mE\Bigl[\sum_{m=1}^{n-1}\int_{t_m}^{t_{m+1}} \|D_u\sigma(u(\zeta)) v(\zeta) - D_u\sigma(u(t_m))v(t_m)\|^2_{L^2} (\xi - t_m)^2\, d\xi\Bigr]\\\nonumber
&\qquad + C\tau \mE\bigl[\|e_u^{n+1}\|^2_{L^2} + \|e_u^{n}\|^2_{L^2}\bigr]\\\nonumber
&\,\,\leq C\tau \mE\Bigl[\sum_{m=1}^{n-1}\int_{t_m}^{t_{m+1}} \|u(\zeta) - u(t_m)\|^2_{L^2} (\xi - t_m)^2\, d\xi\Bigr]  + C\tau \mE\bigl[\|e_u^{n+1}\|^2_{L^2} + \|e_u^{n}\|^2_{L^2}\bigr]\\\nonumber
&\,\, \leq C\tau^5 + C\tau \mE\bigl[\|e_u^{n+1}\|^2_{L^2} + \|e_u^{n}\|^2_{L^2}\bigr].
\end{align*}
Similarly, we can show
\begin{align*}
\mE\bigl[{\tt IV_{1,5}^2}\bigr] \leq C\tau^4 \sum_{m=1}^{n-1} \mE\bigl[\|e_u^m\|^2_{L^2}\bigr] + C\tau \mE\bigl[\|e_u^{n+1}\|^2_{L^2} + \|e_u^{n}\|^2_{L^2}\bigr].
\end{align*}

To bound ${\tt IV_{1,5}^3}$,  using Remark \ref{increment}, Lemma $\ref{lemma3.1}(b)$, the assumption $({\bf A3})$, and the independence property of the  increments $\{\Delta W_n\}$,  we obtain
\begin{align*}
\mE\bigl[{\tt IV_{1,5}^3}\bigr] &\leq \tau\mE\biggl[\Bigl\|\sum_{m=1}^{n-1} D_{u}\sigma(u^m) v^m \bigl(\widetilde{\Delta}W_m - \widehat{\Delta} W_m\bigr)\Bigr\|_{L^2}\|e_u^{n+\frac12}\|_{L^2}\biggr]\\\nonumber
&\leq C\tau\mE\biggl[\Bigl\|\sum_{m=1}^{n-1} D_{u}\sigma(u^m) v^m \bigl(\widetilde{\Delta}W_m - \widehat{\Delta} W_m\bigr)\Bigr\|^2_{L^2}\biggr] + \tau \mE\bigl[\|e_u^{n+\frac12}\|^2_{L^2}\bigr]\\\nonumber
&\leq C\tau \sum_{m=1}^{n-1} \mE\bigl[\|D_u\sigma(u^m) v^m\bigl( \widetilde{\Delta} W_m - \widehat{\Delta} W_m\bigr) \|^2_{L^2}\bigr] + \tau \mE\bigl[\|e_u^{n+\frac12}\|^2_{L^2}\bigr] \\\nonumber
&\leq C\tau^6\sum_{m=1}^{n-1} \mE\bigl[\|v^m\|^2_{L^2}\bigr] + \tau \mE\bigl[\|e_n^{n+\frac12}\|^2_{L^2}\bigr] \\\nonumber
&\leq C \tau^5 + \tau \mE\bigl[\|e_u^{n+\frac12}\|^2_{L^2}\bigr].
\end{align*}
Similarly, we can show 
\begin{align*}
\mE\bigl[{\tt IV_{1,5}^4}\bigr] &\leq C\tau\mE\bigl[\|D_u\sigma(u^n) v^n \widehat{\Delta} W_n\|^2_{L^2}\bigr] + \tau\mE\bigl[\|e_u^{n+\frac12}\|^2_{L^2}\bigr]\\\nonumber
&\leq C\tau^4 + \tau \mE\bigl[\|e_u^{n+\frac12}\|^2_{L^2}\bigr].
\end{align*}
This then completes bounding ${\tt IV_1 + IV_5}$. 

\medskip 
{\em Step 4:}		
Using Cauchy-Schwarz inequality, the assumption $({\bf A1})_2$, It\^o isometry, and Lemma $\ref{lemma2.3}(\rm i)$,  we get 
\begin{align*}
\mE\bigl[{\tt IV_2}\bigr] &= \mE\biggl[\Bigl(\int_{t_n}^{t_{n+1}} \sum_{m=1}^{n-1} \int_{t_m}^{t_{m+1}} \bigl[\sigma(u(t_m)) - \sigma(u^m)\bigr]\, dW(\xi)\, ds, e^{n+\frac12}_u \Bigr) \biggr]\\\nonumber
&\leq C\tau^2\sum_{m=1}^{n-1}\mE\bigl[\|e^m_u\|^2_{L^2}\bigr] + \tau\mE\bigl[\|e^{n+\frac12}_u\|^2_{L^2}\bigr].\\\nonumber
\mE\bigl[{\tt IV_3}\bigr] &= \mE\biggl[\Bigl(\int_{t_n}^{t_{n+1}} \int_{t_n}^s \bigl[\sigma(u(\xi)) - \sigma(u(t_n))\bigr]\, dW(\xi)\, ds , e_u^{n+\frac12}\Bigr)\biggr]\\\nonumber
&\leq C\tau^4 + \tau\mE\bigl[\|e_u^{n+\frac12}\|^2_{L^2}\bigr].
\end{align*}

Now we bound ${\tt IV_4 + IV_6 + IV_8}$ by using the following rewriting:
\begin{align*}
{\tt IV_4 + IV_6 + IV_8}&= \Bigl(\sigma(u(t_n))\int_{t_n}^{t_{n+1}} \bigl(W(s) - W(t_n)\bigr)\,ds, e_u^{n+\frac12}\Bigr) \\\nonumber
&\qquad+ \Bigl(\sigma(u^n) \widehat{\Delta} W_n, e_u^{n+\frac12}\Bigr) - \tau\Bigl(\sigma(u^n) \overline{\Delta} W_n, e_u^{n+\frac12}\Bigr)\\\nonumber
&= \Bigl(\sigma(u(t_n))\int_{t_n}^{t_{n+1}} \bigl(W(s) - W(t_{n+1})\bigr)\,ds, e_u^{n+\frac12}\Bigr) \\\nonumber
&\qquad+\Bigl(\sigma(u(t_n))\int_{t_n}^{t_{n+1}} \bigl(W(t_{n+1}) - W(t_n)\bigr)\,ds, e_u^{n+\frac12}\Bigr) \\\nonumber
&\qquad+ \Bigl(\sigma(u^n) \widehat{\Delta} W_n, e_u^{n+\frac12}\Bigr) - \tau\Bigl(\sigma(u^n) \overline{\Delta} W_n, e_u^{n+\frac12}\Bigr)\\\nonumber
&=-\Bigl(\sigma(u(t_n)) \widetilde{\Delta} W_n, e_u^{n+\frac12}\Bigr) + \tau\Bigl(\sigma(u(t_n)) \overline{\Delta} W_n, e_u^{n+\frac12}\Bigr) \\\nonumber
&\qquad+ \Bigl(\sigma(u^n) \widehat{\Delta} W_n, e_u^{n+\frac12}\Bigr) - \tau\Bigl(\sigma(u^n) \overline{\Delta} W_n, e_u^{n+\frac12}\Bigr)\\\nonumber
&= \tau \Bigl(\bigl(\sigma(u(t_n)) - \sigma(u^n)\bigr)\, \overline{\Delta} W_n, e_u^{n+\frac12} \Bigr) \\\nonumber
&\qquad- \Bigl(\bigl(\sigma(u(t_n)) - \sigma(u^n)\bigr)\, \widetilde{\Delta} W_n, e_u^{n+\frac12}\Bigr)\\\nonumber
&\qquad + \Bigl(\sigma(u^n) \bigl(\widehat{\Delta} W_n - \widetilde{\Delta} W_n\bigr), e_u^{n+\frac12}\Bigr).
\end{align*}
Taking the expectation on both sides and using the assumption $({\bf A1})_2$ and Lemma  \ref{increment}, we obtain
\begin{align*}
\mE\bigl[{\tt IV_4 + IV_6 + IV_8}\bigr] &= \mE\biggl[\tau \Bigl(\bigl(\sigma(u(t_n)) - \sigma(u^n)\bigr)\, \overline{\Delta} W_n, e_u^{n+\frac12} \Bigr) \\\nonumber
&\qquad- \Bigl(\bigl(\sigma(u(t_n)) - \sigma(u^n)\bigr)\, \widetilde{\Delta} W_n, e_u^{n+\frac12}\Bigr)\\\nonumber
&\qquad + \Bigl(\sigma(u^n) \bigl(\widehat{\Delta} W_n - \widetilde{\Delta} W_n\bigr), e_u^{n+\frac12}\Bigr)\biggr]\\\nonumber
&\leq C\tau^2 \mE\bigl[\|e_u^n\|^2_{L^2}\bigr] + C\tau\mE\bigl[\|e_u^{n+\frac12}\|^2_{L^2}\bigr]  \\\nonumber
&\qquad + \frac{C}{\tau}\mE\bigl[\|\bigl(\sigma(u(t_n)) - \sigma(u^n)\bigr)\, \widetilde{\Delta} W_n\|^2_{L^2}\bigr] + C\tau\mE\bigl[\|e_u^{n+\frac12}\|^2_{L^2}\bigr] \\\nonumber
&\qquad + \frac{C}{\tau} \mE\bigl[\|\sigma(u^n) \bigl(\widehat{\Delta} W_n - \widetilde{\Delta} W_n\bigr)\|^2_{L^2}\bigr] + C\tau\mE\bigl[\|e_u^{n+\frac12}\|^2_{L^2}\bigr] \\\nonumber
&\leq C\tau^2 \mE\bigl[\|e_u^n\|^2_{L^2}\bigr] + C\tau\mE\bigl[\|e_u^{n+\frac12}\|^2_{L^2}\bigr] + C\tau^4\mE\bigl[\|u^n\|^2_{L^2}\bigr] \\\nonumber
&\leq C\tau^2 \mE\bigl[\|e_u^n\|^2_{L^2}\bigr] + C\tau\mE\bigl[\|e_u^{n+\frac12}\|^2_{L^2}\bigr] + C\tau^4.
\end{align*}

Finally, using $({\bf A1})_2$ and Lemma \ref{lemma2.3}$(\rm i )$, we get
\begin{align*}
\mE	\bigl[{\tt IV_7}\bigr]  &=  \mE\Bigl[\tau \Bigl(\int_{t_0}^{t_1}\sigma(u(\xi))\, dW(\xi), e_u^{n+\frac12}\Bigr) \Bigr]\\\nonumber
&=  \mE\Bigl[\tau \Bigl(\int_{t_0}^{t_1}[\sigma(u(\xi)) - \sigma(u(0))]\, dW(\xi), e_u^{n+\frac12}\Bigr) \Bigr] + \tau\mE\bigl[\bigl(\sigma(u(0))\, \overline{\Delta} W_0, e_u^{n+\frac12}\bigr)\bigr] \\\nonumber
&\leq C\tau^4 + \tau\mE\bigl[\|e_u^{n+\frac12}\|^2_{L^2}\bigr] + \tau\mE\bigl[\bigl(\sigma(u(0))\, \overline{\Delta} W_0, e_u^{n+\frac12}\bigr)\bigr].
\end{align*}
We note that the last term %$\tau\mE\bigl[\bigl(\sigma(u(0))\, \overline{\Delta} W_0, e_u^{n+\frac12}\bigr)\bigr]$ 
will be combined with the term {\tt I} in \eqref{error_27} and handled together later.

In summary, we have obtained the following upper bound for $\mE\bigl[{\tt IV}\bigr]$:
\begin{align*}
\mE\bigl[{\tt IV}\bigr] &\leq C\tau^2 \mE\bigl[\|e_u^n\|^2_{L^2}\bigr] + C\tau\mE\bigl[\|e_u^{n+\frac12}\|^2_{L^2}\bigr] + C\tau^2\sum_{m=1}^{n-1}\mE\bigl[\|e^m_u\|^2_{L^2}\bigr]\\\nonumber
&\qquad+ C\tau^4 + \tau\mE\bigl[\bigl(\sigma(u(0))\, \overline{\Delta} W_0, e_u^{n+\frac12}\bigr)\bigr].
\end{align*}

\medskip 
{\em Step 5:}
Our attention now shifts to bounding the term {\tt II} in \eqref{error_27}. To the end, we borrow an idea of \cite{feng2022higher} by using Lemma \ref{approx_integral} to do the job. First, we state the following identity:
\begin{align}\label{eq_3.33}
\int_0^s f(\xi)\,d\xi &:= \biggl(\frac12\sum_{m=1}^n \int_{t_{m-1}}^{t_{m+1}} + \frac12 \int_{t_0}^{t_1} + \frac12 \int_{t_n}^{t_{n+1}} - \int_s^{t_{n+1}}\biggr) f(\xi)\, d\xi\\\nonumber
&=\frac12\biggl(\sum_{m=1}^n \int_{t_{m-1}}^{t_{m+1}} +  \int_{t_0}^{t_1} +  \int_{t_n}^{s} - \int_s^{t_{n+1}}\biggr) f(\xi)\, d\xi.
\end{align}
Using this identity, we obtain
\begin{align}\label{eq4.31}
{\tt II} &= \frac12\int_{t_n}^{t_{n+1}}\sum_{m=1}^n\biggl(  \int_{t_{m-1}}^{t_{m+1}} \nabla u(\xi)\, d\xi -  \frac{2\tau\nabla\bigl[u(t_{m+1}) + u(t_{m-1})\bigr]}{2}, \nabla e_u^{n+\frac12}\biggr) \, ds\\\nonumber
& \qquad+ \frac{1}{2} \int_{t_n}^{t_{n+1}}\biggl(\Bigl(\int_{t_0}^{t_{1}} + \int_{t_n}^s - \int_s^{t_{n+1}}\Bigr) \nabla u(\xi)\, d\xi, \nabla e_u^{n+\frac12}\biggr)\, ds\\\nonumber
&=: {\tt II_1 + II_2}.
\end{align}

Now, let $\Phi(\xi) := \mE\bigl[\bigl(\Delta u(\xi), \nabla e_u^{n+\frac12}\bigr)\bigr]$, then $\Phi'(\xi) =  \mE\bigl[\bigl(\Delta v(\xi), \nabla e_u^{n+\frac12}\bigr)\bigr]$. By Lemma \ref{lemma2.3}$(\rm iv)$, we have 
\begin{align*}
|\Phi'(t) - \Phi'(s)| \leq C\bigl(\mE\bigl[\|e_u^{n+\frac12}\|^2_{L^2}\bigr]\bigr)^{\frac12} \, |t-s|^{\frac12}.
\end{align*}
Therefore, performing integration by parts followed by applying Lemma \ref{approx_integral}, we obtain 
\begin{align*}
\mE\bigl[{\tt II_1}\bigr] &= \frac{\tau}{2}\sum_{m=1}^n \mE\biggl[\biggl(\int_{t_{m-1}}^{t_{m+1}} \nabla u(\xi)\, d\xi -  \frac{2\tau\nabla\bigl[u(t_{m+1}) + u(t_{m-1})\bigr]}{2}, \nabla e_u^{n+\frac12}\biggr)\biggr]\\\nonumber
&=-\tau^2\sum_{m=1}^n \mE\biggl[\biggl(\frac{1}{2\tau}\int_{t_{m-1}}^{t_{m+1}} \Delta u(\xi)\, d\xi -  \frac{\Delta\bigl[u(t_{m+1}) + u(t_{m-1})\bigr]}{2},  e_u^{n+\frac12}\biggr)\biggr]\\\nonumber
&\leq C\tau^{\frac52}\bigl(\mE\bigl[\|e_u^{n+\frac12}\|^2_{L^2}\bigr]\bigr)^{\frac12} \\\nonumber
&\leq C\tau^4 + \tau\mE\bigl[\|e_u^{n+\frac12}\|^2_{L^2}\bigr].
\end{align*}

Next, rewriting  $\mE\bigl[{\tt II_2}\bigr]$ as 
\begin{align*}
\mE\bigl[{\tt II_2}\bigr] &= \frac{\tau}{2}\int_0^{t_1} \mE\bigl[\bigl(\nabla u(\xi), \nabla e_u^{n+\frac12}\bigr)\bigr]\, d\xi \\\nonumber
&\qquad+ \frac12 \int_{t_n}^{t_{n+1}}\mE\biggl[\biggl(\int_{t_n}^s \nabla u(\xi)\, d\xi - \int_s^{t_{n+1}} \nabla u(\xi)\, d\xi,\nabla e_u^{n+\frac12}\biggr)\biggr]\,ds\\\nonumber
&=: {\tt II_2^1 + II_2^2}.
\end{align*}
Adding and subtracting the term $ \nabla u(0)$ and performing integration by parts,  we obtain
\begin{align*}
{\tt II_2^1} &=  -\frac{\tau}{2}\int_0^{t_1} \mE\bigl[\bigl(\Delta (u(\xi)- u(0)),  e_u^{n+\frac12}\bigr)\bigr]\, d\xi  - \frac{\tau^2}{2}\mE\bigl[\big(\Delta u(0), e_u^{n+\frac12}\big)\bigr]\\\nonumber
&\leq C\tau^5 + \tau \mE\bigl[\|e_u^{n+\frac12}\|^2_{L^2}\bigr] - \frac{\tau^2}{2}\mE\bigl[\big(\Delta u(0), e_u^{n+\frac12}\big)\bigr].
\end{align*}
We note that the last term %$- \frac{\tau^2}{2}\mE\bigl[\big(\Delta u(0), e_u^{n+\frac12}\big)\bigr]$ 
will be combined with the term ${\tt I}$ in \eqref{error_27}.

To estimate ${\tt II_2^2}$, by subtracting and adding each of the terms $\frac12\int_{t_n}^s \nabla u(t_n)\, d\xi$ and  $\frac12\int_s^{t_{n+1}} \nabla u(t_n)\, d\xi$, we obtain
\begin{align*}
{\tt II_2^2} &= \frac12 \int_{t_n}^{t_{n+1}}\mE\biggl[\biggl(\int_{t_n}^s \nabla\bigl( u(\xi) - u(t_n)\bigr)\, d\xi,\nabla e_u^{n+\frac12}\biggr)\biggr]\,ds \\\nonumber
&\qquad- \frac12 \int_{t_n}^{t_{n+1}}\mE\biggl[\biggl( \int_s^{t_{n+1}} \nabla \bigl(u(\xi)-u(t_n)\bigr)\, d\xi,\nabla e_u^{n+\frac12}\biggr)\biggr]\,ds\\\nonumber
&\qquad + \frac12 \int_{t_n}^{t_{n+1}}\mE\biggl[\biggl(\int_{t_n}^s \nabla u(t_n)\, d\xi - \int_s^{t_{n+1}} \nabla u(t_n)\, d\xi,\nabla e_u^{n+\frac12}\biggr)\biggr]\,ds.
\end{align*}
Performing integration by parts and using Lemma \ref{lemma2.3}$(\rm iii)$, we obtain
\begin{align*}
{\tt II_2^2} &\leq C\int_{t_n}^{t_{n+1}} \mE\biggl[\Bigl\|\int_{t_n}^s \Delta(u(\xi) - u(t_n))\, d\xi\Bigr\|^2_{L^2}\, ds\biggr] \\\nonumber
&\qquad+ C\int_{t_n}^{t_{n+1}} \mE\biggl[\Bigl\|\int_{s}^{t_{n+1}} \Delta(u(\xi) - u(t_n))\, d\xi\Bigr\|^2_{L^2}\, ds\biggr] + \tau\mE\bigl[\|e_u^{n+\frac12}\|^2_{L^2}\bigr]\\\nonumber
&\qquad+ \frac12\mE\biggl[\biggl(\nabla u(t_n) \int_{t_n}^{t_{n+1}} [(s - t_n) - (t_{n+1} - s)]\, ds, \nabla e_u^{n+\frac12}\biggr)\biggr]\\\nonumber
&\leq C\tau^5 + \tau\mE\bigl[\|e_u^{n+\frac12}\|^2_{L^2}\bigr],
\end{align*}
where we have used the fact that $\int_{t_n}^{t_{n+1}} [(s-t_n) - (t_{n+1} - s)]\, ds  =0$.

In summary,  we have proved that 
\begin{align*}
\mE[{\tt II}] \leq C\tau^4 + \tau\mE\bigl[\|e_u^{n+\frac12}\|^2_{L^2}\bigr] - \frac{\tau^2}{2}\mE\big[\bigl(\Delta u_0, e_u^{n+\frac12}\bigr)\big].
\end{align*}

\medskip
{\em Step 6:}
Similar to the approach for bounding ${\tt II}$,  we rewrite ${\tt III}$ as
\begin{align*}
{\tt III} &= \int_{t_n}^{t_{n+1}} \int_0^s \bigl(F(u(\xi)), e_u^{n+\frac12}\bigr)\, d\xi\, ds - \frac{\tau^2}2 \sum_{m=1}^n \Bigl( F(u(t_{m+1})  + F(u(t_{m-1}))), e_u^{n+\frac12}\Bigr) \\\nonumber
&\qquad+\tau^2\biggl(\sum_{m=1}^n \frac{F(u(t_{m+1})) - F(u^{m+1}) + F(u(t_{m-1})) - F(u^{m-1})}{2},e_u^{n+\frac12}\biggr)\\\nonumber
&=: {\tt III_1 + III_2}.
\end{align*}
It follows from the assumption $({\bf A1})_1$ that
\begin{align*}
\mE\bigl[{\tt III_2}\bigr] \leq C\tau^2\sum_{m=1}^n \bigl[\|e_u^{m+1}\|^2_{L^2} + \|e_u^{m-1}\|^2_{L^2}\bigr] + \tau \mE\bigl[\|e_u^{n+\frac12}\|^2_{L^2}\bigr].
\end{align*}

We now bound ${\tt III_1}$ similarly as we did for ${\tt II_1}$ using the identity \eqref{eq_3.33}. 
\begin{align*}
{\tt III_1} &= \frac12\int_{t_n}^{t_{n+1}}\sum_{m=1}^n\biggl(  \int_{t_{m-1}}^{t_{m+1}}  F(u(\xi))\, d\xi -  \frac{2\tau\bigl[F(u(t_{m+1})) + F(u(t_{m-1}))\bigr]}{2},  e_u^{n+\frac12}\biggr) \, ds\\\nonumber
& \qquad+ \frac{1}{2} \int_{t_n}^{t_{n+1}}\biggl(\Bigl(\int_{t_0}^{t_{1}} + \int_{t_n}^s - \int_s^{t_{n+1}}\Bigr)  F(u(\xi))\, d\xi, e_u^{n+\frac12}\biggr)\, ds\\\nonumber
&=: {\tt III_1^1 + III_1^2}.
\end{align*}
Since 
\begin{align*}
& \left|\left( \partial_tF(u(t))-  \partial_sF(u(s)), e_u^{n+\frac12}\right)\right| \\
& \qquad =\left|\left(D_u F(u(t)) v(t)-D_u F(u(s)) v(s), e_u^{n+\frac12}\right)\right| \\
&\qquad  =\left|\left(\left(D_u F(u(t))-D_u F(u(s))\right) v(t)+D_u F(u(s))(v(t)-v(s)), e_u^{n+\frac12}\right)\right| \\
&\qquad  =\left|\left(\left({D_u^2 F}(\tilde{u}(\rho))(u(t)-u(s))\right)(v(t))+D_u F(u(s))(v(t)-v(s)), e_u^{n+\frac12}\right)\right| \\
&\qquad  \leq {C}\|u(t)-u(s)\|_{{H}^1}\|v(t)\|_{{H}^1} \|e_u^{n+\frac12} \|_{{L}^2}+ {C}\|v(t)-v(s)\|_{{L}^2}\|e_u^{n+\frac12}\|_{{L}^2},
\end{align*}
where $\tilde{u}(\rho) := \rho u(t) + (1-\rho) u(s)$ for some $\rho \in [0, 1]$ and the last inequality is obtained by using the assumption $({\bf A3})_1$, then by Lemma \ref{lemma2.3}$(\rm ii)$ we get 
\begin{align*}
\mE\Bigl[\left|\left( \partial_tF(u(t))-  \partial_sF(u(s)), e_u^{n+\frac12}\right)\right| \Bigr] \leq  C\bigl(\mE\bigl[\|e_u^{n+\frac12}\|^2_{L^2}\bigr]\bigr)^{\frac12} |t-s|^{\frac12}.
\end{align*}

It then follows from Lemma \ref{approx_integral} that
\begin{align*}
\mE\bigl[{\tt III_1^1}\bigr] \leq C\tau^4 + \tau\mE\bigl[\|e_u^{n+\frac12}\|^2_{L^2}\bigr].
\end{align*}

Since ${\tt III_1^2}$ can be bounded following the same lines as we did for ${\tt II_2}$, we omit the derivation. 
In summary, we obtain
\begin{align*}
\mE\bigl[{\tt III}\bigr] &\leq C\tau^4 + C\tau\mE\bigl[\|e_u^{n+\frac12}\|^2_{L^2}\bigr] \\\nonumber
&\qquad+ C\tau^2\sum_{m=1}^n\bigl[\|e_u^{m+1}\|^2_{L^2} + \|e_u^{m-1}\|^2_{L^2}\bigr] - \frac{\tau^2}{2}\mE\bigl[\bigl(F(u_0), e_u^{n+\frac12}\bigr)\bigr].
\end{align*}

\medskip
{\em Step 7:}
Collecting all the terms related to the initial data $(u^0, u^1)$ and combining them with the term {\tt I}, and then using \eqref{eq3.3} and \eqref{eq3.4}, we get 
\begin{align*}
\mE[{\tt \tilde{I}}] &:= \mE[{\tt I}] + \tau\mE\bigl[\bigl(\sigma(u_0)\overline{\Delta}W_0,e_u^{n+\frac12}\bigr)\bigr] - \frac{\tau^2}{2} \mE\bigl[\bigl(\Delta u_0, e_u^{n+\frac12}\bigr)\bigr] - \frac{\tau^2}{2} \mE\bigl[\bigl(F(u_0), e_u^{n+\frac12}\bigr)\bigr]\\\nonumber
&= \mE\bigl[\bigl(\tau v_0 - (u^1 - u^0) - \sigma(u^0)\widehat{\Delta} W_0, e_u^{n+\frac12}\bigr) \bigr]  + \frac{\tau^2}{2}\mE\bigl[\bigl(\Delta e_u^1,e_u^{n+\frac12}\bigr)\bigr]\\\nonumber
&\qquad+ \tau\mE\bigl[\bigl(\sigma(u_0)\overline{\Delta}W_0,e_u^{n+\frac12}\bigr)\bigr] - \frac{\tau^2}{2} \mE\bigl[\bigl(\Delta u_0, e_u^{n+\frac12}\bigr)\bigr] - \frac{\tau^2}{2} \mE\bigl[\bigl(F(u_0), e_u^{n+\frac12}\bigr)\bigr]\\\nonumber
&=  \frac{\tau^2}{2}\mE\bigl[\bigl(\Delta e_u^1,e_u^{n+\frac12}\bigr)\bigr],
\end{align*}
which implies that
\begin{align*}
\mE[{\tt \tilde{I}}] &\leq C\tau^4 + \tau\mE\bigl[\|e_u^{n+\frac12}\|^2_{L^2}\bigr].
\end{align*}

\medskip
{\em Step 8:}
Finally, substituting all the estimates for the terms {\tt I - IV} into the right-hand side of \eqref{error_27} and the identities \eqref{lhs_first} and \eqref{lhs_second} into the left-hand side of \eqref{error_27}, we get
\begin{align}\label{last_form}
&\frac12\mE\bigl[\|e_u^{n+1}\|^2_{L^2} - \|e_u^{n}\|^2_{L^2}\bigr] + \frac{\tau^2}{4}\mE\bigl[\|\nabla\overline{e}_u^{n+1}\|^2_{L^2}  - \|\nabla\overline{e}_u^{n-1}\|^2_{L^2}\bigr] \\  \nonumber
&\qquad \leq C\tau^4 + C\tau\mE\bigl[\|e_u^{n + 1}\|^2_{L^2} + \|e_u^{n}\|^2_{L^2}\bigr] + C\tau^2\sum_{m=1}^n\bigl[\|e_u^{m+1}\|^2_{L^2} + \|e_u^{m-1}\|^2_{L^2}\bigr].
\end{align}
Applying the summation operator $\sum_{n=1}^{\ell}$ ($1 \leq \ell <N$) on both sides and using the discrete Grownwall inequality yield the desired estimate \eqref{optimal_rate}.
\end{proof}

\begin{remark}\
(a)	If the more traditional error estimate techniques of Theorem \ref{thm20230704_1} are applied to the $\theta$-scheme with $\theta = \frac12$, we can instead obtain the following error estimate: 
\begin{align}\label{eq3.47}
\max_{1 \leq n \leq N}\Bigl\{\mathbb{E}\left[\|u(t_n) - u^n\|_{L^2}^2\right]&+\mathbb{E}\left[\|\nabla (u(t_n) - u^n)\|_{L^2}^2\right]\\\nonumber
&+\mathbb{E}\left[\|v(t_n) - v^n\|_{L^2}^2\right] \Bigr\}	\le C\tau^2.
\end{align}
Obviously, the above $\mathcal{O}(\tau)$ order  estimate for $u-u^n$ is weaker than the $\mathcal{O}(\tau^{\frac32})$ result of Theorem \ref{theorem32}, however,  \eqref{eq3.47} also  provides $\mathcal{O}(\tau)$ order  estimate for $v-v^n$, which is new and not provided by Theorem \ref{theorem32}.

(b) Compared to the optimal order scheme proposed in \cite{feng2022higher},  our optimal order scheme with $\theta=\frac12$ is cheaper and easier to implement. 
\end{remark}

\section{Numerical Experiments}\label{sec-5}

\subsection{Finite element spatial discretization}
Since the proposed time-discrete schemes are only semi-discretization in time, we still need to discretize them in both spatial variable $x$ and in stochastic variable $\omega$. For the latter, the classical Monte Carlo method will be employed. For the spatial discretization, we shall use the $P_1$- conforming finite element method. To the end, let  $\mathcal{T}_h$ be a regular triangulation of the domain $\mathcal{D}$ and define the finite element space
\begin{align*}
V_h := \bigl\{v_h \in H^1_0(\mathcal{D}): \  v_h \bigl\vert_K \in \mathcal{P}_1(K) \quad \forall \, K \in \mathcal{T}_h \bigr\}\, ,
\end{align*}
where $\mathcal{P}_1(K)$ denotes the collection of all linear polynomials on $K \in \mathcal{T}_h$. 
The initial data $u^1_h$ is defined by
\begin{align*}\label{u1+v1}
u^1_h = \tau v^0_h + u^0_h -\frac{\tau^2}{2}\Delta_h u^0_h - \frac{\tau^2}{2} Q_h F(u^0) - Q_h\sigma(u^0)\widehat{\Delta} W_0 + \tau Q_h\sigma(u^0)\overline{\Delta} W_0,
\end{align*}
where $(u^0_h,v^0_h) := Q_h(u_0,v_0)$, $Q_h$ denotes the $L^2$-projection operator into the finite element space $V_h$, and $\Delta_h $ stands for the discrete Laplacian operator.  

Our fully discrete $\theta$-scheme then reads as 
\begin{definition*}  
Let $\{ t_n\}_{n=0}^N$ be a uniform mesh of $[0, T]$ with mesh size $\tau>0$, and \eqref{u1+v1}.  For every $n \ge 1$, $\theta=0, \frac12$, find a $[V_h]^2$-valued ${\mathcal F}_{t_{n+1}}$-measurable random field $(u_h^{n+1}, v_h^{n+1})$ such that 
\begin{align*}
&\bigl(u_h^{n+1} - u_h^n, \phi_h) = \tau(v_h^{n+1},\phi_h) - \bigl(\sigma(u^n_h)  \widehat{\Delta} W_n, \phi_h    \bigr)
&\forall \phi_h \in  V_h\, , \\ \notag
&(v_h^{n+1}-v_h^n, \psi_h) +\tau \big(\nabla {u}_h^{n, \theta}, \nabla \psi_h \big)  =\tau  \big(F^{n, \theta},  \psi_h \big)  + \Bigl(\sigma(u^n_h)\overline{\Delta} W_n, \psi_h \Bigr) 	& \\
&\hskip 2.2in +2\theta \Bigl(D_{u} \sigma(u_h^{n} ) v_h^{n} \,\widehat{\Delta}W_n, \psi_h \Bigr) &\forall \psi_h \in  V_h.
\end{align*} 
\end{definition*}
Although we shall not analyze the spatial discretization errors in this paper, it can be shown that $u_h^n$ converges to $u^n$ with the optimal rate $\mathcal{O}(h)$ in the energy norm $\mathbb{E} [ \|\cdot\|_{L^2(H^1)}]$. 

\subsection{Test examples}
In this subsection, we provide three 1-d numerical tests to verify our theoretical error estimates. In all our numerical tests given below, we take $\mathcal{D}=(-1,1)$ and the initial data are take to be $u_0(x)=\sin(2\pi x)$ and $v_0(x)=0$  in \eqref{eq20230423_1}. We also define 
\begin{align*}
&L^{\infty}_tL^2_x(u)= \max_{1 \leq n \leq N}\bigl(\mE\bigl[\|u(t_n) - u^n\|^2_{L^2}\bigr]  \bigr)^{\frac12}, \\
&L^{\infty}_tH^1_x(u)=\max_{1 \leq n \leq N}\bigl(\mE\bigl[\|\nabla(u(t_n) - u^n)\|^2_{L^2}\bigr]  \bigr)^{\frac12},\\
&L^{\infty}_tL^2_x(v)=\max_{1 \leq n \leq N}\bigl(\mE\bigl[\|v(t_n) - v^n\|^2_{L^2}\bigr]  \bigr)^{\frac12}.
\end{align*}
In addition,  we shall use a very small spatial mesh size $h = 1/1024$ so that the spatial error becomes negligible and the time discretization error dominates, which then allows us to verify the theoretical error estimates for the proposed time-discrete schemes. 

\medskip
{\bf Test 1.}  {\bf $\theta$-scheme with $\theta=0$.}
\smallskip

%We consider the \eqref{sac_s_mixed} with the following initial conditions
%\begin{equation*}
%	u_0(x)=\sin(2\pi x),\qquad v_0(x)=0.
%\end{equation*}
In this test, the drift and diffusion are chosen to be the following simple linear functions: $F(u)=-u$ and $\sigma(u)= u$.

Table \ref{tab1} shows the computed errors in different norms of the $\theta$-scheme with $\theta =0$ and the computed convergence orders. We observe that the temporal error converges with order $1$, which matches the theoretical order of Theorem \ref{thm20230704_1}.

\begin{table}[htb]\label{tab1}%[!htbp]
\renewcommand{\arraystretch}{1.2}
\centering 
\footnotesize
\begin{tabular}{|l||c|c||c|c||c|c|}
\hline  
&  $L^{\infty}_tL^2_x(u)$ error \quad & order &   
$L^{\infty}_tH^1_x(u)$ error \quad & order &   
$L^{\infty}_tL^2_x(v)$ error\quad & order \\ \hline    
$\tau=1/2$ & $9.969\times10^{-2}$ & --- &   
$6.264\times10^{-1}$ & --- &   
$1.102\times10^{-1}$ & --- \\ \hline    
$\tau=1/4$ & $5.769\times10^{-2}$ & 0.789 &   
$3.625\times10^{-1}$ & 0.789 &   
$5.268\times10^{-2}$ & 1.065\\ \hline    
$\tau=1/8$ & $2.856\times10^{-2}$ & 1.014 &   
$1.794\times10^{-1}$ & 1.014&   
$2.098\times10^{-2}$ & 1.328\\ \hline    
$\tau=1/16$ & $1.469\times10^{-2}$  & 0.959 &   
$9.231\times10^{-2}$ & 0.959&   
$9.911\times10^{-3}$ &  1.082\\ \hline       
\end{tabular}
\caption{Test 1: Temporal errors and convergence rates at $T = 1$.}
\end{table}

{\bf Test 2.}  {\bf $\theta$-scheme with $\theta=0$.}
\smallskip

In this test, we consider nonlinear drift and diffusion terms, they are chosen as $F(u)=\cos(u)$ and $\sigma(u)=\sin(u)$. 

Table \ref{tab2} displays the computed errors in different norms of the $\theta$-scheme with $\theta =0$ and the computed convergence orders.  It is clear that the temporal errors again converge with order $1$.  
\begin{table}[htb]\label{tab2}%[!htbp]
\renewcommand{\arraystretch}{1.2}
\centering 
\footnotesize
\begin{tabular}{|l||c|c||c|c||c|c|}
\hline  
&  $L^{\infty}_tL^2_x(u)$ error \quad & order &   
$L^{\infty}_tH^1_x(u)$ error \quad & order &   
$L^{\infty}_tL^2_x(v)$ error\quad & order \\ \hline    
$\tau=1/2$ & $1.018\times10^{-1}$ & --- &   
$5.890\times10^{-1}$ & --- &   
$1.076\times10^{-1}$ & --- \\ \hline    
$\tau=1/4$ & $5.652\times10^{-2}$ & 0.849 &   
$3.394\times10^{-1}$ & 0.795 &   
$4.344\times10^{-2}$ & 1.308\\ \hline    
$\tau=1/8$ & $3.001\times10^{-2}$ & 0.913 &   
$1.895\times10^{-1}$ & 0.841&   
$1.960\times10^{-2}$ & 1.148\\ \hline    
$\tau=1/16$ & $1.546\times10^{-2}$  & 0.957 &   
$1.035\times10^{-1}$ & 0.872&   
$9.459\times10^{-3}$ &  1.051\\ \hline       
\end{tabular}
\caption{Test 2: Temporal errors and convergence rates at $T = 1 $.}
\end{table}

\medskip
{\bf Test 3.}  {\bf $\theta$-scheme with $\theta=\frac12$.}
\smallskip

Again, we consider the same nonlinear drift and diffusion as in {\bf Test2}, namely, $F(u)=\cos(u)$ and $\sigma(u)=  \sin(u)$.

Table \ref{tab3} shows the computed errors of the numerical solution $(u^n,v^n)$ in different norms, and their convergence orders. We observe that the temporal error for $u^n$ in the $L^2$-norm converges with order $O(\tau^{\frac32})$ as proved in Theorem \ref{theorem32}. In the meantime,  we see an $O(\tau)$ order of convergence for $v^n$ in the $L^2$-norm, which matches well with the theoretical error estimate given in \eqref{eq3.47}. Finally, for comparison purposes, we also computed the temporal error for $u^n$ in the $H^1$-norm. The numerical results also suggest  $O(\tau^{\frac32})$ order of convergence for $u^n$ in the $H^1$-norm, which is not claimed by our error estimates. 

\begin{table}[htb]\label{tab3}%[!htbp]
\renewcommand{\arraystretch}{1.2}
\centering 
\footnotesize
\begin{tabular}{|l||c|c||c|c||c|c|}
\hline  
&  $L^{\infty}_tL^2_x(u)$ error \quad & order &   
$L^{\infty}_tH^1_x(u)$ error \quad & order &   
$L^{\infty}_tL^2_x(v)$ error\quad & order \\ \hline    
$\tau=1/2$ & $8.299\times10^{-2}$ & --- &   
$5.395\times10^{-1}$ & --- &   
$8.780\times10^{-2}$ & --- \\ \hline    
$\tau=1/4$ & $2.364\times10^{-2}$ & 1.811 &   
$1.592\times10^{-1}$ & 1.761 &   
$4.006\times10^{-2}$ & 1.132\\ \hline    
$\tau=1/8$ & $7.433\times10^{-3}$ & 1.669 &   
$4.960\times10^{-2}$ & 1.683&   
$1.867\times10^{-2}$ & 1.101\\ \hline    
$\tau=1/16$ & $2.727\times10^{-3}$  & 1.447 &   
$1.829\times10^{-2}$ & 1.439&   
$8.674\times10^{-3}$ &  1.106\\ \hline       
\end{tabular}
\caption{Test 3: Temporal errors and convergence rates at $T=1 $.}
\end{table}

\end{document}